\documentclass{amsart}
\usepackage{amsfonts,amssymb,amsmath,amsthm}
\usepackage{url}
\usepackage{enumerate}

\urlstyle{sf}
\newtheorem{thrm}{Theorem}[section]
\newtheorem{lem}[thrm]{Lemma}
\newtheorem{prop}[thrm]{Proposition}
\newtheorem{cor}[thrm]{Corollary}

\theoremstyle{definition}

\newtheorem{remark}[thrm]{Remark}
\numberwithin{equation}{section}
\renewcommand{\o}{\mathfrak{O}}

\newcommand{\w}{\varpi}
\renewcommand{\d}{\delta}
\newcommand{\e}{\epsilon}
\newcommand{\g}{\gamma}

\renewcommand{\l}{\lambda}
\newcommand{\A}{\mathbb{A}}
\newcommand{\C}{\mathbb{C}}

\newcommand{\N}{\mathbb{N}}
\newcommand{\Z}{\mathbb{Z}}
\newcommand{\1}{\mathbf{1}}

\newcommand{\sm}{\mathcal{C}^\infty}

\author{N. Matringe}
\address{Universit\'e de Poitiers, Laboratoire de Math\'ematiques et Applications,\\
T\'el\'eport 2 - BP 30179, Boulevard Marie et Pierre Curie,\\
 86962, Futuroscope Chasseneuil Cedex}
\email{matringe@math.univ-poitiers.fr}
\thanks{ }

\keywords{Automorphic L functions}
\subjclass{Primary 11F70, Secondary 11F66}
\begin{document}

\title[A specialisation of the Bump-Friedberg $L$-function]{A specialisation of the Bump-Friedberg $L$-function}

\begin{abstract}
We study the restriction of the Bump-Friedberg integrals to affine lines $\{(s+\alpha,2s),s\in\C\}$. 
It has a simple theory, very close to that of the Asai $L$-function. It is an integral representation of the product
 $L(s+\alpha,\pi)L(2s,\Lambda^2,\pi)$ which we denote by $L^{lin}(s,\pi,\alpha)$ for this abstract, when $\pi$ is a cuspidal automorphic 
representation of $GL(k,\A)$ for 
$\A$ the adeles of a number field. When $k$ is even, we show that for a cuspidal automorphic representation $\pi$, 
the partial $L$-function $L^{lin,S}(s,\pi,\alpha)$ has a pole at $1/2$, if and only if $\pi$ admits a (twisted) global period, 
this gives a more direct proof of a 
theorem of Jacquet and Friedberg, asserting 
that $\pi$ has a twisted global period if and only if $L(\alpha+1/2,\pi)\neq 0$ and $L(1,\Lambda^2,\pi)=\infty$.
When $k$ is odd, the partial $L$-function is holmorphic in a neighbourhood of $Re(s)\geq 1/2$ when $Re(\alpha)$ is 
$\geq 0$.
\end{abstract}
\maketitle

\section{Introduction}

In this paper, we study the restriction of the integrals of two complex variables $(s_1,s_2)$ defined in \cite{BF}, and attached to global
 and local smooth complex representations of $GL(2n)$, to the line $s_2=2(s_1-\alpha)$, for $\alpha \in \C$. We actually study 
slightly more general integrals. It turns out that these integrals have a theory very close to the theory of Asai $L$-functions, 
whose Rankin-Selberg theory, initiated by Flicker, is quite complete now (see \cite{F}, \cite{FZ}, \cite{K}, \cite{AKT}, \cite{AR}, \cite{M1}, 
\cite{M2}, \cite{M3}).\\ 
In \cite{BF}, for $\pi$ a cuspidal automorphic representation of $GL(n)$ of the adeles $\A$ of a global field, the authors mainly define the global 
integrals as the integral of a cusp form in $\pi$ against an Eisenstein series, prove their functional equation, and show that they unravel to the 
integral of the Whittaker function associated to the cusp form against a function in the space of an induced representation. This allows them to obtain
 an Euler factorisation, they then compute the local integrals at the unramified places and thus obtain an integral representation of 
$L(s_1,\pi)L(s_2,\Lambda^2,\pi)$. The location of the possible poles is quickly discussed.\\
In the first paragraphs of the Section \ref{local}, we define the $L$ function $L^{lin}(s,\pi)$ for a generic representation 
$\pi$ of $GL(n,F)$ for $n$ even equal to $2m$ (Theorem \ref{nonarchL}), when $F$ is a non-archimedean local field, and show a nonvanishing result. A much more 
complete study of this non archimedean $L$-function can be found in \cite{M4}.\\ 
We compute the Rankin-Selberg integrals when $\pi$ is unramified in Section \ref{unramified}.\\
In the archimedean case (Section \ref{archi}), we prove results of convergence and non-vanishing of the archimedean integrals, 
that we use in the global situation.\\
Section \ref{global} is devoted to the global theory. We take $\pi$ a smooth cuspidal automorphic representation of $GL(2m,\A)$, 
for $\A$ the adele ring of a number field $k$. We first study the integrals $I(s,\phi,\Phi)$ associated to a cusp form $\phi$ in the space of 
$\pi$, a Schwartz function $\Phi$ on $\A^m$, and a character $\chi$ of $GL(m,\A)\times GL(m,\A)$ trivial on the diagonal embedding 
of $Gl(m,\A)$, using mirabolic Eisenstein series similar to those in \cite{JS}, \cite{JS1}, or \cite{F}, this 
seems to avoid the normalisation by the $L$-function of the central character of $\pi$ as in \cite{BF}. We thus obtain their meromorphicity, 
functional equation as well as the location of their possible poles in Theorem \ref{I}. Then we prove 
the equality of these integrals (Theorem \ref{egal}) with the Rankin-Selberg integrals $\Psi(s,W_{\phi},\chi,\Phi)$ obtained by 
integrating the Whittaker functions associated to $\phi$, and thus get the Euler factorisation in Section \ref{unfolding}. The proof of this 
is similar to that of \cite{BF}, but we use successive partial Fourier expansions (Proposition \ref{Fourier}), which makes 
the computations quicker.\\ 
In the last part, we define the partial $L$-function $L^{lin,S}(s,\pi,\chi)$, and show that it is meromorphic, and moreover, when 
the real part of the idele class character defining $\chi$ is non negative, that it is 
holomorphic for $Re(s)>1/2$, and that it has a pole at $1/2$ if and only if $\pi$ has a twisted global period (Theorem \ref{partialpole}). 
We deduce from this the theorem of Friedberg and Jacquet discussed in the abstract (Theorem \ref{FJ}). It seems that this proof of the aforementioned theorem is 
not in \cite{BF} because the local $L$ functions were not really studied in [loc. cit.]. Especially the nonvanishing results, which are 
however easy (especially in the nonarchimedean case), are absent in \cite{BF}. Studying the Bump-Friedberg $L$-function 
through its restriction to complex lines of slope $2$ (in particular considering it as a function of one complex variable) simplifies 
the analysis.\\
In Section \ref{odd}, we give the results for the odd case. The global Rankin-Selberg integrals are holomorphic this time, and we prove that 
the partial $L$-function is holomorphic in a neighbourhood of $Re(s)\geq 1/2$ with the same assumption on the idele 
character defining $\chi$.\\

{\ack{I thank the referee for his careful reading and valuable suggestions. I thank the CNRS for giving me 
a "d\'el\'egation" in 2012 when this work was started. This work was partially supported by the research project ANR-13-BS01
-0012 FERPLAY.}}

\section{Preliminaries}\label{prelim}

Let $n$ belong to $\N$. We will use the notations $G_n$ for the algebraic group $GL(n)$, $Z_n$ for its center, $P_n$ for its mirabolic subgroup (the matrices in $G_n$ with last row $(0,\dots,0,1)$), $B_n$ for the Borel subgroup of upper triangular matrices in $G_n$, $N_n$ for its unipotent radical. We will write $U_n$ for the unipotent radical of $P_n$, and $u(x)$ will be the matrix $\begin{pmatrix} I_{n-1} & x \\ & 1\end{pmatrix}$ in $U_n$. 
Let $\mathcal M_k$ denote the set of $k\times k$ square matrices and $\mathcal M_{a,b}$ the $a\times b$ matrices. 
For $n> 1$, the map $g\mapsto \begin{pmatrix} g & \\ & 1 \end{pmatrix}$ is an embedding of the group $G_{n-1}$ in $G_{n}$. We denote by $P_n$ the subgroup $G_{n-1}U_n$ of $G_n$. This is the mirabolic subgroup of $G_n$.\\
Suppose $n=2m$ is even.  Let $w_n\in G_n$ be the permutation matrix for the permutation given by
\[
\begin{pmatrix} 1 & 2 & \cdots & m & | & m+1 & m+2 & \cdots & 2m \\ 
                1 & 3 & \cdots & 2m-1 & | & 2 & 4 & \cdots & 2m
                \end{pmatrix}.
\]
In this case we denote by $M_n$ the standard Levi of $G_n$ associated to the partition $(m,m)$ of $n$. Let $H_n=w_n M_n w_n^{-1}$, we 
write $h(g_1,g_2)=w_n diag(g_1,g_2)w_n^{-1}$ for $diag(g_1,g_2)$ in $M_n$.

Suppose $n=2m+1$ is odd. In this case we let $w_n$ be the permutation matrix in $G_n$ associated to the permutation
\[
\begin{pmatrix} 1 & 2 & \cdots & m & | & m+1 & m+2 & \cdots &  2m & 2m+1 \\ 
                1 & 3 & \cdots & 2m-1 & | & 2 & 4 & \cdots &  2m & 2m+1
                \end{pmatrix}.
\]
so that $w_{2m}=w_{2m+1}|_{G_{2m}}$ and let $w_{2m+1}=w_{2m+2}|_{GL_{2m+1}}$ so that $w_{2m+1}$ is the permutation matrix corresponding to 
\[
\begin{pmatrix} 1 & 2 & \cdots & m+1 & | & m+3 & m+4 & \cdots &  2m+1 \\ 
                1 & 3 & \cdots & 2m+1 & | & 2 & 4 & \cdots &  2m-2
                \end{pmatrix}.
\]
We let $M_n$ denote the standard parabolic associated to the partition $(m+1,m)$ of $n$ and set $H_n=w_n M_n w_n^{-1}$ as in the even case, we again write $h(g_1,g_2)=w_n diag(g_1,g_2)w_n^{-1}$ for $diag(g_1,g_2)$ in $M_n$. Note that the $H_n$ are compatible in the sense that $H_n\cap G_{n-1}=H_{n-1}$. \\
For $C\subset G_n$, we write $C^\sigma$ for $C\cap H_n$.
We will also need the matrix $w'_n$, which is the matrix of the permutation 
$$
\begin{pmatrix}  1 & 2 & \cdots & m  \! \ \ \ \ \ \ \ \ | &  m+1 & | & m+2 & m+3 & \cdots &  2m   \\ 
                 1 & 3 & \cdots & \!\! 2m-1 \ \ |&  2m  & | & 2   & 4   & \cdots &  2m-2 
                \end{pmatrix}
$$ when $n=2m$ is even, and 
of 
$$\begin{pmatrix}  1 & 2 & \cdots & m \! \ \ \ | &  m+1 & | & m+2 & m+3 & \cdots &  2m+1   \\ 
                 2 & 4 & \cdots &  2m  \ | &  2m +1 & | & 1   & 3   & \cdots &  2m-1 
                \end{pmatrix}$$ when $n=2m+1$ is odd.\\                  
In the sequel, $F$ will generally be a local field, whereas $\A$ will be the ring of adeles of a number field $k$. 
When $G$ is the points of an algebraic group defined over $\Z$ on $F$ or $\A$, we denote by $Sm(G)$ the category of smooth complex $G$-modules. 
Every representation we will consider from now on will be smooth and complex.\\
We will denote by $\delta_H$ the positive character of $N_G(H)$ such that if $\mu$ is a right Haar measure on $H$, and $int$ is the action given 
by $(int(n)f)(h)=f(n^{-1}hn)$, of $N_G(H)$ smooth functions $f$ with compact support on $H$, then $\mu \circ int(n)= \delta_H^{-1}(n)\mu $ for $n$ 
in $N_G(H)$.\\
If $G=G_{n}(\A)$, $H=H_n(\A)$, $\pi$ is a cuspidal representation of $G$ with trivial central character, and $\chi$ is a smooth character 
of $H$ trivial on $Z_n(A)$, we say that $\pi$ has an $(H,\chi)$-period if there is a cusp form in the space of $\pi$ such that the integral 
(which is convergent by Proposition 1 of \cite{AGR}) $$\int_{Z_{n}(\A)H_n(k)\backslash H_n(\A)} \phi(h)\chi^{-1}(h)dh$$ is nonzero.\\
For $m\in \N-\{0\}$, we will denote by $\mathcal{S}(F^m)$ the Schwartz space of functions (smooth and rapidly decreasing) on $F^m$ when $F$ is archimedean, and by 
$\sm_c(F^m)$ the Schwartz space of smooth functions with compact support on $F^m$ when $F$ is non-archimedean. We denote by 
$\mathcal{S}(\A^m)$ the space of Schwartz functions on $\A^m$, which is by definition the space of linear combinations of decomposable functions 
$\Phi=\prod_\nu \Phi_\nu$, with $\Phi_\nu$ in $\mathcal{S}(k_\nu^m)$ when $\nu$ is an archimedean place, and in $\sm_c(k_\nu^m)$ when 
$\nu$ is non-archimedean, with the extra condition that $\Phi_\nu=\1_{{\o_\nu}^m}$ for almost every non archimedean place $\nu$. On these spaces, there is a natural action of either $G_m(F)$, or $G_m(\A)$. In every case, if $\theta$ 
is a nontrivial character of $F$ or $\A/k$, we will denote by $\widehat{\Phi}^\theta$ or $\widehat{\Phi}$ the 
Fourier transform of a Schwartz function $\Phi$, with respect to a $\theta$-self-dual Haar measure.\\
If $\chi$ is a character of the local field $F$ of characteristic zero, with normalised absolute value $|.|_F$, we denote by 
$Re(\chi)$ the real number $r$ such that for all $x$ in $F^*$, one has $\sqrt{\chi(x)\overline{\chi(x)}}=|x|_F^r$. 
If $\chi$ is a character of $\A^*/k^*$, for $k$ a number field, and $\A$ its adele ring, we denote by $Re(\chi)$ the real number such that 
for all $x$ in $\A^*$, one has $\sqrt{\chi(x)\overline{\chi(x)}}=|x|^r$ for $|.|$ the norm of $\A^*$. \\

In the sequel, the equalities of two integrals, involving integration over quotients or subgroups,
 are valid up to correct normalisation of Haar measures.\\ 

\section{The local theory}\label{local}

We start with the non-archimedean case, which is studied in great detail in \cite{M4}. Here we just give the definition of the $L$-function, 
a non vanishing property needed for the global case, as well as the unramified computation.  

\subsection{The local non-archimedean $L$-function}

Let $\theta$ be a nonzero character of $F$. Let $\pi$ be a generic representation of $G_{n}$, $W$ belong to the Whittaker model $W(\pi,\theta)$,
and  $\Phi$ be a function in $C_c^{\infty}(F^{m})$. We denote by $\chi$ a character of $H_n$ of the form
 $h(h_1,h_2)\mapsto \alpha(det(h_1)/det(h_2))$, for $\alpha$ a character of $F^*$, and by $\d$ the character 
$h(h_1,h_2) \mapsto |h_1|/|h_2|$ of $H_n(F)$.\\
Denoting by $L_m$ the $m$-th row of a matrix, we define formally the integral 
$$\Psi(s,W,\chi,\Phi)= \int_{N_{n}\cap H_n \backslash H_n} W (h)\Phi(L_m(h_2))|h|^s\chi(h)\d(h)^{-1/2} dh.$$ 

This integral is convergent for $Re(s)$ large, and defines an element of $\C(q^{-s})$:

\begin{thrm}\label{nonarchL}
There is a real number $r_{\pi,\chi}$, such that each integral $\Psi(s,W,\chi,\Phi)$ converges for $Re(s)>r_{\pi,\chi}$. 
Moreover, when $W$ and $\Phi$ vary in $W(\pi,\theta)$ and $C_c^{\infty}(F^{m})$ respectively, they 
span a fractional ideal of $\C[q^{s},q^{-s}]$ in $\C(q^{-s})$, generated by an Euler factor which we denote 
$L^{lin}(s,\pi,\chi)$.\\
\end{thrm}

\begin{proof}
The convergence for $Re(s)$ greater than a real $r_{\pi,\chi}$ is classical, it is a consequence of the asymptotic expansion of the restriction 
of $W$ to the torus $A_n$, which can be found in \cite{JPS2} for example. 
The fact that these integrals span a fractinal ideal of $\C[q^{s},q^{-s}]$ is a consequence of the observation that $\Psi(s,W,\chi,\Phi)$ is 
multiplied by 
$|h|^{-s}\chi^{-1}(h)\d^{1/2}(h)$ when one replaces $W$ and $\Phi$ by their right translate under $h$.\\
Denoting by $c_\pi$ the central character of $\pi$, and by $K_n$ the points of 
$H_n$ on $\o$, we write, thanks to Iwasawa decomposition, the integral $\Psi(s,W,\chi,\Phi)$ as 
$$\int_{K_n}\int_{N_{n}^\sigma \backslash P_{n}^\sigma} 
W (pk)|p|^{s-1/2}\chi(pk)\left(\int_{F^*}\Phi(aL_m(k))c_{\pi}(a)|a|^{ns}da\right)dpdk$$
As in \cite{JPS2}, for any $\phi$ in $\mathcal{C}^\infty_c(N_{n} \backslash P_{n},\theta)$, there is a $W$ such that 
$W_{|P_{n}}$ is $\phi$. Such a $\phi$ is right invariant under an open subgroup $U$ of $K_n$, which also fixes $\chi$. We then choose 
$\Phi$ to be the characteristic function of $\{L_m(h_2), h_2\in U\}$, the integral then reduces to a positive multiple of 
$$\int_{N_{n}^\sigma \backslash P_{n}^\sigma} \phi (p)|p|^{s-1/2}\chi(p)dp.$$ 
We now see that for $\phi$ well-chosen, this last integral is $1$, i.e. $\Psi(s,W,\chi,\Phi)$ is $1$. This implies that the generator of 
the fractional ideal spanned by the $\Psi(s,W,\chi,\Phi)$ can be chosen as an Euler factor.\\
\end{proof}
\begin{remark}\label{differentnotations}
There is a notational difference with \cite{M4}. In [loc. cit.], we call $L^{lin}(s,\pi,\chi)$ what we call $L^{lin}(s,\pi,\chi\delta^{1/2})$ here. 
\end{remark}

We have the following corollary to the previous proof.

\begin{cor}\label{constant}
There are $W\in W(\pi,\theta)$, and $\phi$ the characteristic function of a neighbourhood of 
$(0,\dots,0,1)\in F^n$, such that $\Psi(s,W,\chi,\Phi)$ is equal to $1$ in $\C(q^{-s})$. 
\end{cor}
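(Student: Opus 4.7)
The plan is to repackage the construction already carried out at the end of the proof of Theorem \ref{nonarchL}, keeping closer track of the choices so as to read off an explicit pair $(W,\Phi)$ realising the value $1$.

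First, I would invoke the Kirillov model theorem from \cite{JPS2}: for any $\phi$ in $\sm_c(N_{n}\backslash P_{n},\theta)$ there is $W\in W(\pi,\theta)$ with $W_{|P_n}=\phi$. I would then fix a compact open subgroup $U$ of $K_n$ small enough to stabilise $\chi$, to trivialise $c_\pi$ on the unit piece of $U$, and small enough that the character $p\mapsto |p|^{s-1/2}\chi(p)\d^{-1/2}(p)$ is identically $1$ on a $U$-invariant neighbourhood of the identity in $N_n^\sigma\backslash P_n^\sigma$. I would then pick $\phi$ supported in such a neighbourhood, right-$U$-invariant, and normalised so that $\int_{N_n^\sigma\backslash P_n^\sigma}\phi(p)|p|^{s-1/2}\chi(p)\,dp=1$.

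Next, I would take $\Phi$ to be the characteristic function of the set $V=\{L_m(h_2) : h(h_1,h_2)\in U\}$, which, because elements of $U$ lie close to the identity, is a neighbourhood of $(0,\dots,0,1)$. Applying Iwasawa decomposition as in the proof of Theorem \ref{nonarchL}, one rewrites
$$\Psi(s,W,\chi,\Phi)=\int_{K_n}\int_{N_{n}^\sigma \backslash P_{n}^\sigma} W(pk)|p|^{s-1/2}\chi(pk)\left(\int_{F^*}\Phi(aL_m(k))c_{\pi}(a)|a|^{ns}da\right)dp\,dk.$$
For $k\notin U$ the inner Kirillov integral vanishes by the support of $\phi$, while for $k\in U$ the Tate integral runs over $a\in\o^\times$, where both $|a|^{ns}$ and $c_\pi(a)$ are identically $1$; it therefore contributes a positive constant independent of $s$. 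Feeding this back and using the chosen normalisation of $\phi$ yields $\Psi(s,W,\chi,\Phi)=1$ in $\C(q^{-s})$, provided the Haar measures on $K_n$, on $F^\times$ and on $V$ are matched accordingly.

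The only potentially awkward step is arranging a single $U$ to satisfy all three conditions simultaneously (stabilisation of $\chi$, triviality of $c_\pi$ on the unit piece, and smallness forcing the character $|\cdot|^{s-1/2}\chi\d^{-1/2}$ to be constant on the support); each is an open condition, so this is pure bookkeeping and can be achieved by shrinking $U$.
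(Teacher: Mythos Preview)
Your construction is exactly the one carried out in the proof of Theorem \ref{nonarchL}, and the pair $(W,\Phi)$ you produce is the correct one. However, one step is misargued. You claim that ``for $k\notin U$ the inner Kirillov integral vanishes by the support of $\phi$''. This does not follow: for $k\in K_n\setminus U$ the element $pk$ need not lie in $P_n$, so $W(pk)$ is not governed by $\phi=W|_{P_n}$, and even where it is, a small support for $\phi$ near the identity says nothing about $\phi(pk)$ for varying $p$.

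The localisation in $k$ actually comes from the \emph{other} integral. With $\Phi=\1_V$ and $V=\{L_m(h_2):h\in U\}$ a small neighbourhood of $(0,\dots,0,1)$, the Tate integral $\int_{F^*}\Phi(aL_m(k_2))c_\pi(a)|a|^{ns}\,d^*a$ is nonzero only when $L_m(k_2)\in F^*\cdot V$; since $k_2\in G_m(\o)$ this forces $a\in\o^\times$ (in fact $a$ close to $1$, so $c_\pi(a)=1$ once $U$ is small enough) and $k$ to lie in $(P_n^\sigma\cap K_n)\,U$. Then the right $U$-invariance of $W$ (which you did arrange) removes the $U$-factor, and the $(P_n^\sigma\cap K_n)$-factor is absorbed into the $p$-integral by a change of variables. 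After that, your normalisation of $\phi$ gives $\Psi(s,W,\chi,\Phi)=1$ as claimed. So the outline is right and matches the paper's argument; only the attribution of the localisation needs to be corrected.
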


\subsection{The unramified computation}\label{unramified}

Here we show that the local Rankin-Selberg integrals give the expected $L$-function at the unramified places.\\
Let $\pi^0$ be an unramified generic representation of $GL(n,F)$, and $W^0$ the normalised 
spherical Whittaker function in $W(\pi^0,\theta)$ (here $\theta$ has conductor $\mathfrak{O}$), and let $\Phi^0$ the characteristic function 
of $\mathfrak{O}^n$. We will use the notations of Section 3 of \cite{F}. We recall that $\pi^0$ is a commuting product 
(in the sense of \cite{BZ2}, i.e. corresponding to normalised parabolic induction)
$\chi_1 \times\dots \times \chi_{n}$ of unramified characters, and we denote $\chi_i(\w)$ by $z_i$. 
Then it is well-known (see \cite{Sat}) that if $\l$ is an element of $\Z^{n}$, then $W(\w^{\l})$ is zero unless $\l$ belongs 
to the set $\Lambda^+$ consisting of $\l$'s satisfying $\l_1\geq\dots \geq \l_{n}$, in which case 
$W(\w^{\l})=\d_{B_{n}}^{1/2}(\w^{\l})s_{\l}(z)$, where $s_\l(z)=det(z_i^{\l_j+n-j})/det(z_i^{n-j})$.\\
In this case, using Iwasawa decomposition, denoting by $\Lambda^{++}$ the subset of $\Lambda^{+}$ with $\lambda_{n}\geq 0$, 
and writing $a'$ for $(a_1,a_3,\dots,a_{n-1})$ and $a''$ for $(a_2,a_4,\dots,a_{n})$, 
one has  the identities 
$$\Psi(s,W^0,\chi,\Phi^0)=\int_{A_{n}} W^0(a)\d_{B_n}^{-1}(a')\d_{B_n}^{-1}(a'')\chi(a)\Phi^0(a_{n})\chi_0(a)|a|^s\d(a)^{-1/2}da$$
$$=\int_{A_{n}} W^0(a)\chi_0(a)\Phi^0(a_{n})\d_{B_{n}}^{-1/2}(a)\alpha(det(a'))\alpha^{-1}(det(a''))|a|^sda$$
$$=\sum_{\l\in \Lambda^{++}}s_{\l}(z)q^{-s.tr\l}\alpha(\w)^{\sum_{i=1}^m (\l_{2i-1}-\l_{2i})}=
\sum_{\l\in \Lambda^{++}}s_{\l}(q^{-s}z)\alpha(\w)^{c(\l)},$$ 
where $c(\l)=\sum_{i=1}^m (\l_{2i-1}-\l_{2i})$.
We now refer to Example 7 of p.78 in \cite{Mac}, which asserts that if $x=(x_1,\dots,x_n)$ is a vector with nonzero complex coordinates, 
and if $t$ is a complex number, then the sum $\sum_{\l\in \Lambda^{++}}s_{\l}(x)t^{c(\l)}$ is equal to 
$$\prod_i (1-tx_i)\prod_{j<k} (1-x_jx_k).$$ In particular, with $x_i=z_iq^{-s}$, and $t=\alpha(\w)$, we obtain that $\Psi(s,W^0,\chi,\Phi^0)$ 
is equal to 
$$\prod_i (1-\alpha(\w)z_iq^{-s})\prod_{j<k} (1-z_j z_kq^{-2s})=L(\alpha\otimes \pi^0,s)L(\pi^0,\Lambda^2,2s).$$

We end with the archimedean theory.

\subsection{Convergence and non vanishing of the archimedean integrals}\label{archi}

Here $F$ is archimedean, $\theta$ is a unitary character of $F$, and $\pi$ is a generic unitary representation of $G_{n}$, as in Section 2 of \cite{JS1}, to which we refer concerning this vocabulary. We denote by $W(\pi,\theta)$ its smooth Whittaker model.\\
We denote by $\d$ the character $h(h_1,h_2) \mapsto |h_1|/|h_2|$ of $H_n(F)$, and by $\chi$ the character 
 $h(h_1,h_2) \mapsto \alpha(det(h_1))/\alpha((det(h_2))$, for $\alpha$ a character of $F^*$.\\

We now define formally the following integral, for $W$ in $W(\pi,\theta)$, and $\Phi$ in $\mathcal{S}(F^n)$:
$$\Psi(s,W,\chi,\Phi)= \int_{N_{n}\cap H_n \backslash H_n} W (h)\Phi(L_m(h_2))\chi(h)|h|^s \d(h)^{-1/2} dh.$$

We first state a proposition concerning the convergence of this integral:     

\begin{prop}\label{convarchi} 
For $Re(\alpha)\geq 0$, there is a positive real $\e$ independant of $W$ and $\Phi$, such that the integral $\Psi(s,W,\chi,\Phi)$ is absolutely convergent for $s\geq 1/2-\e$. 
In particular it defines a holomorphic function on this half plane. 
\end{prop}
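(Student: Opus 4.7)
The approach is the standard one for Rankin--Selberg integrals on the torus, combined with a gauge estimate for Whittaker functions of unitary generic representations.

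The plan is to apply the Iwasawa decomposition to $H_n = N^\sigma A^\sigma K^\sigma$, where $N^\sigma = N_n \cap H_n$, $A^\sigma = A_n \cap H_n$, and $K^\sigma$ is a maximal compact subgroup. This reduces the integral to
\[
\Psi(s,W,\chi,\Phi) \;=\; \int_{K^\sigma}\!\int_{A^\sigma} W(ak)\,\Phi\bigl(L_m(a_2 k_2)\bigr)\,\chi(a)\,|a|^s \delta(a)^{-1/2}\,\delta_{B^\sigma}^{-1}(a)\,da\,dk,
\]
after parametrising $a = h(a_1,a_2)$ with $a_i = \mathrm{diag}(a_{i,1},\dots,a_{i,m}) \in A_m(F)$. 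By compactness of $K^\sigma$ it suffices to bound the inner torus integral uniformly in $k$.

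Next I would separate out the last torus coordinate $a_{2,m}$, which is the only one meeting the Schwartz datum via $L_m(a_2 k_2) = a_{2,m} L_m(k_2)$ (up to the other components being killed). A change of variables on this coordinate produces a Tate-type factor of the form
\[
\int_{F^*} \Phi\bigl(t L_m(k_2)\bigr)\,\alpha^{-1}(t)\,|t|^{ns+c}\,d^*t,
\]
for an explicit shift $c$. Since $L_m(k_2)\neq 0$ for $k \in K^\sigma$ (outside a measure zero set, handled by standard arguments), the assumption $\mathrm{Re}(\alpha) \geq 0$ makes this Tate integral absolutely convergent for $\mathrm{Re}(s)$ in a half plane strictly larger than $\mathrm{Re}(s)> -c/n$, and bounded uniformly in $k$ by a Schwartz seminorm of $\Phi$ there.

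For the remaining torus variables I would invoke the Jacquet--Shalika gauge estimate (as presented in \cite{JS1} for archimedean unitary generic representations): there exist continuous positive functions $\xi_i$ on $F^*$, rapidly decreasing as $|x|\to \infty$ and of at most polynomial growth as $|x|\to 0$, and a real number $\eta < 1/2$ (coming from the bound on the exponents of a unitary generic $\pi$) such that
\[
|W(ak)| \;\leq\; C_k\,\delta_{B_n}^{1/2}(a)\,\prod_{i=1}^{n-1} \xi_i(a_i/a_{i+1})\,\Bigl(\prod_{i=1}^n |a_i|\Bigr)^{-\eta},
\]
with $C_k$ locally bounded in $k$. Plugging this into the integral, writing $\delta_{B_n}^{1/2}\delta_{B^\sigma}^{-1}\delta^{-1/2}$ in coordinates and collecting the exponents contributed by $|a|^s$ and $\chi$, the integral over the remaining torus factors becomes a product of one-dimensional integrals in the variables $a_i/a_{i+1}$, which converge provided each exponent exceeds a certain threshold strictly smaller than $1/2$ because $\eta<1/2$ and $\mathrm{Re}(\alpha)\geq 0$. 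This yields a uniform $\epsilon>0$ (depending only on $\eta$) with absolute convergence for $\mathrm{Re}(s) \geq 1/2 - \epsilon$, and holomorphy on that half plane follows from Morera or dominated convergence.

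The main obstacle is bookkeeping the exponents: ensuring that, after combining $|a|^s$, $\delta^{-1/2}$, $\delta_{B^\sigma}^{-1}$, $\chi$, and the factor $\delta_{B_n}^{1/2}$ from the gauge estimate, the resulting powers of $|a_i/a_{i+1}|$ all come out with real part strictly greater than the threshold permitted by the gauge at $s = 1/2$. The hypothesis $\mathrm{Re}(\alpha)\geq 0$ is precisely what one needs to keep all such exponents on the right side of the critical line, providing the slack for the $\epsilon$.
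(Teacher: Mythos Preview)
Your overall strategy---Iwasawa decomposition, reduction to a torus integral, gauge estimate for $|W|$, then exponent bookkeeping in the simple-root coordinates $t_i=a_i/a_{i+1}$---is exactly the paper's approach. The paper reduces in one step to an integral over $A_{n-1}$ (absorbing your Tate factor into the Iwasawa step and the compact integration), simplifies the modulus characters via $\delta_{B_n}^{-1}(a')\delta_{B_n}^{-1}(a'')\delta(a)^{-1/2}=\delta_{B_{n-1}}^{-1/2}(a)|a|^{-1/2}$, and then reads off $\epsilon$ as the minimum of the real parts of the exponents appearing in the Whittaker asymptotics.

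The one genuine issue is the form of your gauge estimate. You write
\[
|W(ak)|\le C_k\,\delta_{B_n}^{1/2}(a)\,\prod_{i=1}^{n-1}\xi_i(a_i/a_{i+1})\,\Bigl(\prod_i|a_i|\Bigr)^{-\eta},
\]
with $\xi_i$ ``rapidly decreasing at infinity and of at most polynomial growth at $0$'' and a single $\eta<1/2$. This is not the estimate available in \cite{JS1} or \cite{JS2}, and as stated it is too weak: ``polynomial growth at $0$'' allows $\xi_i(t_i)\sim |t_i|^{-N_i}$ with $N_i$ arbitrary, and then no bound on $\eta$ will force the $t_i$-integrals to converge near $0$. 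Likewise the global factor $|\det(a)|^{-\eta}$ is not what the asymptotic expansion produces; the exponents live in the simple-root variables, not in $|\det|$. The correct archimedean bound (the one the paper cites from Section~4 of \cite{JS2}) is that $|W(a)|$ is majorised by a finite sum of terms of the form
\[
\delta_{B_{n-1}}^{1/2}(t)\,S(t_1,\dots,t_{n-1})\,\prod_{i=1}^{n-1}|t_i|^{r_i}\cdot(\text{polynomial in }\log|t_i|),
\]
with $S$ Schwartz and each $r_i>0$ (this positivity being exactly the unitarity input). With this in hand, after the modulus simplification the remaining integrand is $S(t)\prod_i|t_i|^{r_i+i(s-1/2)}$ times $\prod_j\alpha(t_{2j-1})$, and one gets absolute convergence for $\mathrm{Re}(s)>1/2-\min_i r_i$, i.e.\ $\epsilon=\min_i r_i>0$. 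Replace your gauge by this precise form and your argument goes through; the rest of your sketch (including where $\mathrm{Re}(\alpha)\ge 0$ enters, namely so that the odd-indexed $\alpha(t_{2j-1})$ do not spoil the exponents) is correct.
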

\begin{proof}
It is a consequence of Iwasawa decomposition that to prove this statement it is enough to prove it for the integral 
$$\int_{A_{n-1}} W (a)|a|^s\d_{B_{n}}^{-1}(a')\d_{B_{n}}^{-1}(a'')\alpha(det(a'))\alpha^{-1}(det(a''))\d(a)^{-1/2} da,$$ 
where $a'=(a_1,\dots,a_{n-1})$, and $a''=(a_2,\dots,a_{n-2},1)$. However we have the equality 
$\d_{B_{n}}^{-1}(a')\d_{B_{n}}^{-1}(a'')\d(a)^{-1/2}=\d_{B_{n}}^{-1/2}(a)=\d_{B_{n-1}}^{-1/2}(a)|a|^{-1/2}$. But according to Section 4 of 
\cite{JS2}, writing $t_i$ for $a_i/a_{i+1}$ there is a finite set $X$ consisting of functions which are products of 
polynomials in the logarithm of the $|t_i|$'s and a character $\chi(a)=\prod_{i=1}^{n-1}\chi_i(t_i)$ with $Re(\chi_i)>0$, such that $|W(a)|$ is 
majorised by a sum of functions of the form $S(t_1,\dots,t_{n-1})\d_{B_{n-1}}^{1/2}(t)C_\chi(t)$, where $S$ 
is a Schwartz function on $F^{n-1}$, and $C_{\chi}$ belongs to $X$. Hence we only need to consider the convergence of 
$$\int_{A_{n-1}} \!\!\!\!\!\! C_\chi(t(a)) S(t(a))\alpha(det(a')/det(a''))|a|^{s-1/2} da$$ 
$$= \int_{A_{n-1}} \!\!\!\!\!\! C_\chi(t) S(t)\prod_{j=1}^n\alpha(t_{2j-1})\prod_{i=1}^{n-1}|t_i|^{i(s-1/2)} d t.$$
The statement follows, taking $\e=min(Re(\chi_j))$ for $C_\chi$ in $X$. 
\end{proof}

Now we state our second result, about the nonvanishing of our integrals at $1/2$ for good choices of $W$ and $\Phi$. The proof of this proposition, as that of the previous one, will be an easy adaptation of the techniques of \cite{JS1}, though we followed even more closely the version of \cite{FZ}. 

\begin{prop}\label{nonvanish}
Suppose that $Re(\alpha)\geq 0$, and let $s$ be a complex number with $Re(s)\geq 1/2-\e$. There is $W$ in $W(\pi,\theta)$, and $\Phi$ in $\mathcal{S}(F^n)$, such that $\Psi(s,W,\chi,\Phi)$ is nonzero. Moreover one can choose $\Phi\geq 0$, so that 
$\widehat{\Phi}(0)>0$.
\end{prop}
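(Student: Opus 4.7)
The plan is to adapt the archimedean Jacquet--Shalika non-vanishing argument, in the form used in \cite{FZ}, to the present integral; conceptually the proof mirrors that of Theorem \ref{nonarchL} and Corollary \ref{constant}, with Dixmier--Malliavin replacing the elementary Kirillov construction available in the non-archimedean case.

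First, I would perform an Iwasawa-type decomposition of $H_n$ relative to $N_n^\sigma$ exactly as in the proof of Theorem \ref{nonarchL}, rewriting $\Psi(s,W,\chi,\Phi)$ as an iterated integral over $K_n^\sigma$, over $N_n^\sigma \backslash P_n^\sigma$, and over the centre $F^*$ of $H_n$. The point of this reformulation is that all data involved in the integrand except $W$ and $\Phi\circ L_m$ assemble into a continuous factor $\chi(h)|h|^s\d(h)^{-1/2}$ that is equal to $1$ at $h=e$.

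Second, using Dixmier--Malliavin applied to the archimedean Kirillov model of $\pi$ (the standard input in \cite{JS1}), choose $W \in W(\pi,\theta)$ so that $W|_{P_n}$ is a real-valued non-negative smooth function on $N_n \backslash P_n$, supported in an arbitrarily small neighbourhood $U$ of $e$, with $W(e) > 0$. Choose $\Phi \in \mathcal{S}(F^m)$ non-negative, supported in a small neighbourhood of $(0,\dots,0,1)$, with $\Phi(0,\dots,0,1) > 0$; any such $\Phi$ automatically satisfies $\widehat{\Phi}(0) = \int_{F^m} \Phi(x)\,dx > 0$, which takes care of the last assertion of the proposition.

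Finally, by continuity there is a small enough neighbourhood $V$ of $e$ in $H_n$ on which the factor $\chi(h)|h|^s\d(h)^{-1/2}$ has real part at least $1/2$. After shrinking $U$ so that the support of $W|_{H_n}$ lies in $V$, the non-negativity of $W$ and of $\Phi \circ L_m$ yields
$$\mathrm{Re}\,\Psi(s,W,\chi,\Phi) \;\geq\; \tfrac{1}{2}\int_{N_n \cap H_n \backslash H_n} W(h)\Phi(L_m(h_2))\,dh \;>\; 0,$$
the right-hand side being strictly positive since the integrand is continuous, non-negative, and strictly positive at $h=e$. The only technical step requiring care, exactly as in \cite{JS1} and \cite{FZ}, is invoking the correct archimedean Kirillov / Dixmier--Malliavin density statement for $W|_{P_n}$; once this is granted, continuity and positivity finish the argument.
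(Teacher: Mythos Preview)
Your approach differs from the paper's. The paper argues by contradiction: assuming $\Psi(s,W,\chi,\Phi)=0$ for all $W$ and all $\Phi\geq 0$, it runs an inductive descent from $H_n$ to $H_{n-1}$ to $\dots$ to $H_1$, using at each step only that the convolution $g\mapsto\int W(gu^\sigma(x))\Phi(x)\,dx$ again lies in $W(\pi,\theta)$ (immediate from smoothness), and concludes $W(I_n)=0$ for every $W$, a contradiction. No archimedean Kirillov-model statement is invoked.

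Your direct positivity argument has a genuine gap. You control $W|_{P_n}$ via the Kirillov model, but the integral $\Psi$ is over $N_n^\sigma\backslash H_n$, and $H_n\not\subset P_n$; knowing that $W|_{P_n}$ is a non-negative bump near $e$ tells you nothing about $W|_{H_n}$. Indeed $Z_n\subset H_n$ and $W(z)=c_\pi(z)W(e)$, so if $W(e)>0$ then $W|_{H_n}$ can never have support contained in a small neighbourhood of $e$, contrary to what you assert. Concretely, after your Iwasawa step the integrand involves $W(pk)$ with $p\in P_n^\sigma$ and $k\in K_n^\sigma$; your Kirillov choice constrains $W(p)$ but not $W(pk)$ for $k\neq e$, and $\Phi$ only pins down the last row of $k_2$, leaving $k_1$ and most of $k_2$ free. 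In the non-archimedean proof of Theorem~\ref{nonarchL} this is handled by right invariance of $W$ under an open compact subgroup of $K_n$, a device unavailable over $\R$ or $\C$. (As a secondary point, the claim that $C_c^\infty(N_n\backslash P_n,\theta)$ sits inside the smooth archimedean Kirillov model is stronger than what Dixmier--Malliavin alone gives, and is not the technique of \cite{JS1} or \cite{FZ}; both proceed by the descent argument the paper follows.)
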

\begin{proof}
If not, $\Psi(s,W,\chi,\Phi)$ is zero for every $W$ in $W(\pi,\theta)$, and $\Phi\geq 0$ in $S(F^n)$. We are first going to prove that this implies that 
$$\int_{N_{n-1}^\sigma\backslash H_{n-1}} W \begin{pmatrix} h& \\ &1 \end{pmatrix}\chi(h) |h|^{s-1/2} dh=0$$ for every $W$ in $W(\pi,\theta)$. 
Indeed, one has 
$$\int_{N_{n}^\sigma\backslash G_{n}^\sigma} \!\!\!\!\!\!\!\!\!\!\!\!
 W(h)\Phi(L_m(h_2))\chi(h)|h|^s \d(h)^{-1/2}dh=\!\! \int_{N_n^\sigma\backslash G_{n}^\sigma}\!\!\!\!\!\! \!\!\!\!\!\!W(h)\Phi(L_m(h))\chi(h)|h|^{s-1/2}|h_2|dh$$ 
$$=\int_{P_{n}^\sigma\backslash  G_{n}^\sigma} \left(\int_{N_{n}^\sigma\backslash P_{n}^\sigma} W(ph)\chi(ph)|ph|^{s-1/2}dp\right)
\Phi(L_m(h_2))d\bar{h}$$ 
where $|h_2|dh$ is quasi-invariant on $N_{n}^\sigma\backslash G_{n}^\sigma$, and $d\bar{h}$ is quasi-invariant on 
$P_{n}^\sigma\backslash  G_{n}^\sigma$.\\

But $P_{n}^\sigma\backslash  G_{n}^\sigma \simeq F^n-\{0\}$ via $\bar{h}\mapsto L_n(h_2)$, and the Lebesgue measure on 
$F^n-\{0\}$ corresponds to $d\bar{h}$ via this homeomorphism. Hence, denoting 
$G(\bar{h})=  \int_{N_{n}^\sigma\backslash  P_{n}^\sigma} W(ph)\chi(ph)|ph|^{s-1/2}dp$, one has that for every $\Phi$: 
$$\int_{F^n-\{0\}} G(x)\Phi(x)dx=0,$$ in particular $G(0,\dots,0,1)=0$ 
(taking $\Phi\geq 0$ approximating the Dirac measure supported at $(0,\dots,0,1)$), hence
 $\int_{N_{n}^\sigma\backslash  P_{n}^\sigma} W(p)\chi(p)|p|^{s-1/2}dp=0$.\\
Then, one checks 
(see Section 2 of \cite{JS1}), that for every $\Phi\in \mathcal{S}(F^{n-1})$, the map 
$W_{\phi}: g\mapsto \int_{F^{n-1}}W(gu^\sigma(x))\Phi(x)dx$, where $u^\sigma$ is the natural isomorphism between $F^{n-1}$ and $U_{n}^\sigma$, 
belongs to $W(\pi,\theta)$ again. But 
$W_{\phi}\begin{pmatrix} h& \\ &1 \end{pmatrix} = W \begin{pmatrix} h& \\ &1 \end{pmatrix}\widehat{\Phi}(L_m(h_1))$ for $h$ in $H_{n-1}$, hence 
$$\int_{N_{n-1}^\sigma\backslash H_{n-1}} W \begin{pmatrix} h& \\ &1 \end{pmatrix}\widehat{\Phi}(L_m(h_1))\chi(h) |h|^{s-1/2} dh$$ 
is zero for every $W$ and $\Phi$, which in turn implies the equality 
$$\int_{N_{n-2}^\sigma\backslash H_{n-2}} W \begin{pmatrix} h& \\ &I_2 \end{pmatrix}\chi(h)|h|^{s-1/2} dh=0$$ for every $W$ in $W(\pi,\theta)$. Continuing the process, we obtain 
$W(I_{n})=0$ for every $W$ in $W(\pi,\theta)$, a contradiction. We didn't check the convergence of our integrals at each step, but it follows from Fubini's theorem.
\end{proof}

\section{The global theory}\label{global}

\subsection{The Eisenstein series}
In the global case, let $\pi$ be a smooth automorphic cuspidal representation of $G(\A)$ with trivial central character, 
$\phi$ a cusp form in the space of $\pi$, and $\Phi$ an element of the Schwartz space $\mathcal{S}(\A^{n})$. We denote by
 $\chi$ a character of $H_n(\A)$ of the form $h(h_1,h_2)\mapsto \alpha(det(h_1)/det(h_2))$ for $\alpha$ a character of $\A^*/k^*$, 
and by $\d$ again the character 
$h(h_1,h_2) \mapsto |h_1|/|h_2|$ of $H_n(\A)$. Then we define 
$$f_{\chi,\Phi}(s,h)=|h|^s\chi(h)\d^{-1/2}(h)\int_{\A^*}\Phi(aL_m(h_2))|a|^{ns}d^*a$$ for $h$ in $H$, $s$ in 
the half plane $Re(s)>1/{n}$, where the integral converges absolutely. It is obvious that $f_{\chi,\Phi}(s,h)$ 
is $Z_{n}(k)P_{n}^\sigma(k)$-invariant on the left.\\
Now we average $f$ on $H_n(k)$ to obtain the following Eisenstein series: 
$$E(s,h,\chi,\Phi)=\sum_{\gamma\in Z_{n}(k)P_{n}^\sigma(k)\backslash H_n(k)} f_{\chi,\Phi}(s,\gamma h).$$ 
One can rewrite $E(s,h,\chi,\Phi)$ as  
$$|h|^s\chi(h)\d^{-1/2}(h)\int_{k^*\backslash \A^*}\Theta'_\Phi(a,h)|a|^{ns}d^*a,$$ 
where $\Theta'_\Phi(a,h)=\sum_{\xi\in k^n-\{0\}}\Phi(a \xi h_2)$.\\
According to Lemmas 11.5 and 11.6 of \cite{GJ}, it is absolutely convergent for $Re(s)>1/2$,
 uniformly on compact subsets of $H_n(k)\backslash H_n(\A)$, and of moderate growth with respect to $h$.\\

Write $\Theta_\Phi(a,h)$ for $\Theta'_\Phi(a,h)+\Phi(0)$, then the Poisson formula for $\Theta_\Phi$ gives:

$$\Theta_\Phi(a,h)=|a|^{-n}|h_2|^{-1}\Theta_{\hat{\Phi}}(a^{-1},^t\!h^{-1}).$$

This allows us to write, for $c$ a certain nonzero constant: 

$$E(s,h,\chi,\Phi)=|h|^s\chi(h)\d^{-1/2}(h)\int_{|a|\geq 1}\Theta'_\Phi(a,h)|a|^{ns}d^*a+$$
$$|h|^{s-1/2}\chi(h)\int_{|a|\geq 1}\Theta'_{\widehat{\Phi}}(a,^t\!h^{-1})|a|^{n(1-2s)}d^*a+u(s)$$

with $u(s)=-c\Phi(0)|h|^s\chi(h)\d^{-1/2}(h)/2s+c\widehat{\Phi}(0)\chi(h)|h|^{s-1/2}/(1-2s)$.\\

We deduce from this, appealing again to Lemma 11.5 of \cite{GJ}, the following proposition:

\begin{prop} 
$E(s,h,\chi,\Phi)$ admits a meromorphic extension to $\C$, has at most simple poles at $0$ and $1/2$, and satisfies the functional equation:

$$E(1/2-s,^t\!h^{-1},\chi^{-1}\d^{1/2},\widehat{\Phi})=E(s,h,\chi,\Phi).$$
\end{prop}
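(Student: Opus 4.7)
My plan is to deduce the proposition directly from the explicit decomposition
$E(s,h,\chi,\Phi) = A(s,h,\chi,\Phi) + B(s,h,\chi,\Phi) + u(s)$
displayed just above the statement; here $A$ and $B$ denote the two integrals over $\{|a|\geq 1\}$ involving $\Theta'_\Phi(a,h)$ and $\Theta'_{\widehat\Phi}(a,{}^th^{-1})$ respectively (with their appropriate prefactors in $s$, $h$, $\chi$, $\delta$), and $u(s)$ is the elementary meromorphic function of $s$ coming from the ``constant terms'' of the theta series.

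For the meromorphic continuation and the pole structure, I would invoke Lemma 11.5 of \cite{GJ}: the punctured theta series $\Theta'_\Phi(a,h) = \sum_{\xi\in k^n\setminus\{0\}}\Phi(a\xi h_2)$ decays faster than any negative power of $|a|$ as $|a|\to\infty$, locally uniformly in $h$. Consequently, restricting to $|a|\geq 1$ absorbs the potentially divergent exponential factors in $|a|$ for every $s\in\mathbb{C}$, so that both $A$ and $B$ are entire functions of $s$, of moderate growth in $h$. The remaining term $u(s)$ is manifestly meromorphic with at most simple poles at $s=0$ and $s=1/2$, coming from the visible denominators $2s$ and $1-2s$. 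Adding the three pieces gives the claimed analytic structure of $E$.

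For the functional equation, apply the substitution $(s,h,\chi,\Phi)\mapsto(1/2-s,{}^th^{-1},\chi^{-1}\delta^{1/2},\widehat\Phi)$ to each of the three summands. Using $({}^t h(h_1,h_2))^{-1} = h({}^t h_1^{-1},{}^t h_2^{-1})$ one obtains the identities $|{}^th^{-1}|=|h|^{-1}$, $\chi({}^th^{-1})=\chi(h)^{-1}$, and $\delta({}^th^{-1})=\delta(h)^{-1}$. Combined with $\widehat{\widehat\Phi}(x)=\Phi(-x)$ and the invariance of $k^n\setminus\{0\}$ under $\xi\mapsto-\xi$, which gives $\Theta'_{\widehat{\widehat\Phi}}(a,h)=\Theta'_\Phi(a,h)$, a direct check shows that the substitution swaps $A\leftrightarrow B$ and exchanges the two summands of $u(s)$. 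This yields the claimed equality $E(1/2-s,{}^th^{-1},\chi^{-1}\delta^{1/2},\widehat\Phi)=E(s,h,\chi,\Phi)$.

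The only real obstacle is the bookkeeping in the last paragraph: one has to verify that the prefactor $|h|^s\chi(h)\delta^{-1/2}(h)$ of $A$ transforms into the prefactor $|h|^{s-1/2}\chi(h)$ of $B$ under the substitution, and symmetrically, which amounts to tracking how the various $\delta^{\pm 1/2}$ factors cancel. No deeper analytic input is required beyond Lemma 11.5 of \cite{GJ} and the Poisson formula already used above.
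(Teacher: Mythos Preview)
Your proposal is correct and follows essentially the same route as the paper: the paper simply states that the proposition is deduced from the displayed splitting $E=A+B+u(s)$ together with Lemma~11.5 of \cite{GJ}, and you have spelled out precisely that deduction (entirety of $A,B$ from the rapid decay of $\Theta'$, the obvious pole structure of $u$, and the symmetry $A\leftrightarrow B$, plus swapping of the two pieces of $u$, under the substitution). There is nothing to add.
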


Then the following integral converges absolutely for $Re(s)>1/2$:  $$I(s,\phi,\chi,\Phi)=\int_{Z_{n}(\A)H_n(k)\backslash H_n(\A)} E(s,h,\chi,\Phi)\phi(h)dh.$$

\begin{thrm}\label{I}
The integral $I(s,\phi,\chi,\Phi)$ extends meromorphically to $\C$, with poles at most simple at $0$ and $1/2$, moreover, a pole at $1/2$ occurs if and only if 
the global $\chi^{-1}$-period $$\int_{Z_{n}(\A)H_n(k)\backslash H_n(\A)}\chi(h) \phi(h)dh$$ is not zero, and $\widehat{\Phi}(0)\neq 0$. 
The integral $I(s,\phi,\chi,\Phi)$ also admits the following functional equation:
$$I(1/2-s,\tilde{\phi},\chi^{-1}\d^{1/2},\widehat{\Phi})=I(s,\phi,\chi,\Phi),$$ 
where $\tilde{\phi}:g\mapsto \phi(^t\!g^{-1})$.
\end{thrm}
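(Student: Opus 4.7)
The plan is to insert the decomposition of $E(s,h,\chi,\Phi)$ established just before the theorem, integrate term by term against the cusp form $\phi$, and then derive the functional equation from that of the Eisenstein series via the substitution $h\mapsto {}^t\!h^{-1}$.

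First, write $E = E_1 + E_2 + u$, where $E_1$ and $E_2$ are the two theta-type integrals over $\{|a|\geq 1\}$ and $u(s)$ collects the two boundary terms with simple poles at $s=0$ and $s=1/2$. Because the truncated theta series $\Theta'_\Phi(a,h)$ decays rapidly as $|a|\to\infty$ and is of moderate growth in $h$, each $E_i(s,h)$ is entire in $s$ and of moderate growth in $h$, uniformly on compacta in $s$; since $\phi$ is a cusp form and hence of rapid decay on $Z_n(\A)H_n(k)\backslash H_n(\A)$ (Proposition 1 of \cite{AGR} gives the absolute convergence), dominated convergence then shows that $\int E_i(s,h)\phi(h)\,dh$ is entire in $s$. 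All the poles of $I(s,\phi,\chi,\Phi)$ therefore come from $\int u(s)\phi(h)\,dh$, which manifestly has at most simple poles at $0$ and $1/2$. I expect this interchange-of-integration step to be the main obstacle, since one must bound the theta-truncated integrals in $h$ uniformly in $s$ on compact sets so that the rapid decay of $\phi$ overwhelms the moderate growth of $E_i$.

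Next, the residue at $s=1/2$ is read off directly from the second summand of $u(s)$: using $(s-1/2)/(1-2s)\to -1/2$ as $s\to 1/2$, one gets
$$\mathrm{Res}_{s=1/2} I(s,\phi,\chi,\Phi) \;=\; -\tfrac{c}{2}\,\widehat{\Phi}(0) \int_{Z_{n}(\A)H_n(k)\backslash H_n(\A)}\chi(h)\phi(h)\,dh,$$
which is nonzero precisely when both $\widehat{\Phi}(0)\neq 0$ and the $\chi^{-1}$-period of $\phi$ is nonzero, giving the stated ``if and only if''.

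Finally, for the functional equation, start from $I(1/2-s,\tilde\phi,\chi^{-1}\d^{1/2},\widehat\Phi)$ and make the substitution $h\mapsto {}^t\!h^{-1}$ in the outer integral. Since $w_n$ is a permutation matrix one has ${}^t\!w_n=w_n^{-1}$, so $H_n$, the center $Z_n$, and $H_n(k)$ are each stable under transpose-inverse and Haar measure on the quotient is preserved; combined with $\tilde\phi({}^t\!h^{-1})=\phi(h)$ and the functional equation of $E$ established in the preceding proposition, this identifies the two integrals and yields $I(1/2-s,\tilde\phi,\chi^{-1}\d^{1/2},\widehat\Phi)=I(s,\phi,\chi,\Phi)$.
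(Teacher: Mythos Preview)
Your plan is exactly the paper's (extremely brief) argument: the meromorphic continuation and functional equation are inherited from those of $E(s,h,\chi,\Phi)$ in the preceding proposition, and the residue at $s=1/2$ is read off from the boundary term in $u(s)$, giving $c\,\widehat{\Phi}(0)\int\chi(h)\phi(h)\,dh$ up to the harmless constant. One technical point to tighten: the three pieces $E_1$, $E_2$, $u$ of your decomposition are \emph{not} individually $Z_n(\A)$-invariant (under $h\mapsto zh$ with $z=tI_n$ the factor $|h|^s$ scales by $|t|^{ns}$, and the truncation $|a|\geq 1$ in the theta integral does not absorb this), so you cannot literally integrate each over $Z_n(\A)H_n(k)\backslash H_n(\A)$. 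The clean fix is to subtract from $E$ its principal parts at $0$ and $1/2$ (which \emph{are} $Z_n(\A)$-invariant, since $|h|^{s-1/2}\big|_{s=1/2}=1$ and $|h|^s\big|_{s=0}=1$), obtaining a $Z_n(\A)$-invariant function holomorphic in $s$ and of moderate growth in $h$, and then integrate that against $\phi$; equivalently, multiply by $s(s-1/2)$ before interchanging. The residue computation and the functional equation argument via $h\mapsto{}^t h^{-1}$ are then exactly as you wrote.
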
 
\begin{proof}
It is clear that the residue of $I(s,\phi,\chi,\Phi)$ at $1/2$ is 
$$c\widehat{\Phi}(0)\int_{Z_{n}(\A)H_n(k)\backslash H_n(\A)}\chi(h) \phi(h)dh,$$ hence the result about periods. 

\end{proof}

\subsection{The Euler factorisation}\label{unfolding}
 
Let $\theta$ be a nontrivial character of $\A/k$, we denote by $W_{\phi}$ the Whittaker function on $G_{n}(\A)$ 
associated to $\phi$, and we let
 $$\Psi(s,W_{\phi},\chi,\Phi)=\int_{N_{n}^\sigma (\A)\backslash H_n(\A)} W(h)\Phi(\eta h)\chi(h)|h|^s\d^{-1/2}(h) dh,$$ 
this integral converges absolutely for $Re(s)$ large by classical gauge estimates of Section 13 of \cite{JPS}, and is the product of 
the similar local integrals. We will need the following expansion of cusp forms on the mirabolic subgroup, which can be found in \cite{C}, p.5.

\begin{prop}\label{Fourier}
 Let $\phi$ be a cusp form on $P_l(\A)$, then one has the following partial Fourier expansion with uniform convergence 
for $p$ in compact subsets of $P_l(\A)$:
$$\phi(p)=\sum_{\g\in P_{l-1}(k)\backslash G_{l-1}(k)}
\left(\int_{y\in (\A/k)^{l-1}} \phi(u(y)\begin{pmatrix} \g & \\ & 1 \end{pmatrix} p)\theta^{-1}(y)dy\right)$$
\end{prop}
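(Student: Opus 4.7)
The plan is to Fourier--expand $\phi$ along the abelian unipotent subgroup $U_l$ (which is normal in $P_l$ and isomorphic to the additive group $\A^{l-1}$), and then collapse the resulting sum of characters into a single orbit sum under $G_{l-1}(k)$.

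Fix $p\in P_l(\A)$. Since $\phi$ is automorphic on $P_l(\A)$ and $U_l(k)\subset P_l(k)$, the function $y\mapsto \phi(u(y)p)$ descends to the compact abelian group $U_l(k)\backslash U_l(\A)\simeq(\A/k)^{l-1}$. Pontryagin duality gives the absolutely convergent Fourier series
$$
\phi(u(y)p)=\sum_{\xi\in k^{l-1}} a_\xi(p)\,\theta(\xi\cdot y),\qquad a_\xi(p)=\int_{(\A/k)^{l-1}}\phi(u(z)p)\theta^{-1}(\xi\cdot z)\,dz,
$$
where the sum converges uniformly for $p$ in compact subsets of $P_l(\A)$ by standard rapid-decay estimates of Fourier coefficients of smooth functions (this is also the content of the preamble in \cite{C}). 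Setting $y=0$,
$$
\phi(p)=\sum_{\xi\in k^{l-1}} a_\xi(p).
$$
The term $\xi=0$ is the constant term of $\phi$ along $U_l$, which vanishes by cuspidality.

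Next I regroup the nonzero $\xi$'s via the right action of $G_{l-1}(k)$ on row vectors. This action is transitive on $k^{l-1}\setminus\{0\}$, and the stabiliser of the distinguished row vector $\xi_0=(0,\dots,0,1)$ is exactly $P_{l-1}(k)$. Writing $\xi=\xi_0\gamma$ and performing the measure-preserving change of variable $z\mapsto \gamma^{-1}z$ on $(\A/k)^{l-1}$, the conjugation identity
$$
u(\gamma^{-1}z)=\begin{pmatrix}\gamma^{-1} & \\ & 1\end{pmatrix} u(z)\begin{pmatrix}\gamma & \\ & 1\end{pmatrix}
$$
combined with left $P_l(k)$-invariance of $\phi$ (applied to $\mathrm{diag}(\gamma^{-1},1)\in P_l(k)$) gives
$$
a_{\xi_0\gamma}(p)=\int_{(\A/k)^{l-1}}\phi\!\left(u(z)\begin{pmatrix}\gamma & \\ & 1\end{pmatrix}p\right)\theta^{-1}(\xi_0\cdot z)\,dz.
$$
Identifying $z\mapsto \theta^{-1}(\xi_0\cdot z)$ with the standard character $\theta^{-1}$ on $(\A/k)^{l-1}$ used in the statement, and summing over $\gamma\in P_{l-1}(k)\backslash G_{l-1}(k)$, yields the claimed expansion.

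The main obstacle is not the algebraic manipulation, which is formal, but the justification that the outer sum over cosets converges uniformly on compact subsets of $P_l(\A)$. One does not have a priori absolute convergence of the double sum indexed by $\xi$, since after collapsing we lose the Fourier-theoretic decay in $\xi$; rather, one invokes the fact that $\phi$ is of uniform moderate growth and its Fourier coefficients $a_\xi(p)$, viewed as functions of $p$ on a compact set, decay faster than any polynomial in $\|\xi\|$, an estimate which is standard and is exactly the point addressed in the reference \cite{C}. Given that uniform convergence, the derivation above is rigorous.
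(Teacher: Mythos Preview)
Your argument is correct and is precisely the standard one; the paper does not supply its own proof of this proposition but simply cites Cogdell's lecture notes \cite{C}, p.~5, where exactly this Fourier expansion along $U_l$ followed by the $G_{l-1}(k)$-orbit collapse is carried out.

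One small remark on your final paragraph: the worry you raise about convergence after collapsing is not a genuine obstacle. The map $\gamma\mapsto \xi_0\gamma$ is a \emph{bijection} between $P_{l-1}(k)\backslash G_{l-1}(k)$ and $k^{l-1}\setminus\{0\}$, so the sum over cosets is nothing but a relabelling of the original absolutely convergent Fourier series $\sum_{\xi\neq 0}a_\xi(p)$; no decay is ``lost.'' The rapid decrease of the $a_\xi(p)$ on compacta (which you correctly identify as the analytic input, following from smoothness of $\phi$ at the archimedean places and local constancy at the finite places) already gives absolute and locally uniform convergence of the single sum, and absolute convergence is preserved under rearrangement. So once you have the Fourier expansion converging absolutely, the regrouping is purely formal and needs no further justification.
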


We now unravel the integral $I(s,\phi,\chi,\Phi)$, following the strategy in 
\cite{BF} and \cite{JS2}, which is to unravel step by step.

\begin{thrm}\label{egal}
 One has the identity $I(s,\phi,\chi,\Phi)= \Psi(s,W_{\phi},\chi,\Phi)$ for $Re(s)$ large.
\end{thrm}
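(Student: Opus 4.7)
The plan is to prove $I=\Psi$ by a Rankin--Selberg style unfolding, carried out in two stages: first unfold the Eisenstein series, then peel $\phi$ off row by row via the successive partial Fourier expansions of Proposition \ref{Fourier}. I work throughout for $\mathrm{Re}(s)$ large, which makes every integral below absolutely convergent: for $I$ by the estimates of Lemmas~11.5--11.6 of \cite{GJ} cited above, and for $\Psi$ by the gauge estimates of Section~13 of \cite{JPS}.

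\emph{Stage 1 (unfolding the Eisenstein series).} Replacing $E(s,h,\chi,\Phi)$ by its defining sum over $\gamma\in Z_n(k)P_n^\sigma(k)\backslash H_n(k)$, using the $H_n(k)$-invariance and trivial central character of $\phi$, and interchanging sum with integral, yields
$$I(s,\phi,\chi,\Phi) = \int_{Z_n(\A)P_n^\sigma(k)\backslash H_n(\A)} f_{\chi,\Phi}(s,h)\,\phi(h)\,dh.$$
The Tate integral $\int_{\A^*}\Phi(a L_m(h_2))|a|^{ns}d^*a$ hidden in $f_{\chi,\Phi}$ will be folded into the outer integration during the subsequent stage, using that the action of $\A^*$ on the second $G_m$-factor of $H_n$ agrees, modulo the left action of $Z_n(\A)$, with the rescaling of the first factor.

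\emph{Stage 2 (iterated partial Fourier expansions).} Apply Proposition \ref{Fourier} to the restriction of $\phi$ to $P_n(\A)$, with $l=n$: this expresses $\phi(h)$ as a sum over $\gamma\in P_{n-1}(k)\backslash G_{n-1}(k)$ of first-level Fourier coefficients. Inserted in the integral from Stage~1, this sum is combined with the $H_n$-integration by the classical unfolding trick: the key point, made possible by the compatibility $H_n\cap G_{n-1}=H_{n-1}$ from the preliminaries, is that the relevant action of a subgroup of $H_n(k)$ on $P_{n-1}(k)\backslash G_{n-1}(k)$ has a single open orbit with stabiliser $P_{n-1}^\sigma(k)$, while orbits outside it give zero because the integrand becomes a period of $\phi$ along the unipotent radical of a proper parabolic, which vanishes by cuspidality. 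Repeating the argument with $l=n-1,n-2,\dots,2$, the successive integrations against $\theta^{-1}$ telescope into the full Whittaker integration producing $W_\phi$, the integration domain progressively unfolds to $N_n^\sigma(\A)\backslash H_n(\A)$, and the Tate integral is absorbed during the process, so that $\Phi(a L_m(h_2))$ reduces to $\Phi(L_m(h_2))=\Phi(\eta h)$. The final expression is precisely $\Psi(s,W_\phi,\chi,\Phi)$.

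\emph{Main obstacle.} The main technical difficulty is the orbit bookkeeping at each application of Proposition \ref{Fourier}: one must identify the open orbit on $P_{l-1}(k)\backslash G_{l-1}(k)$ with stabiliser exactly $P_{l-1}^\sigma(k)$ (the interleaving permutations $w_l$ and $w'_l$ from the preliminaries are essential here), and verify that non-open orbits kill the integrand by cuspidality. In parallel, the modular characters $|h|^s$, $\chi(h)$, $\d^{-1/2}(h)$ and the Jacobians arising from the successive changes of variable must be tracked so that the accumulated integrand matches that of $\Psi$ exactly, rather than a twist of it.
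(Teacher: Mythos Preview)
Your outline has the right skeleton (unfold the Eisenstein series, then iterate Proposition~\ref{Fourier}), and this is indeed what the paper does. But two points in your Stage~2 are off, and one of them reflects a genuine misunderstanding of how the unfolding works here.

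First, a minor point: the Tate integral is not ``absorbed during the process''. It disappears immediately after Stage~1: integrating $f_{\chi,\Phi}$ over $Z_n(\A)P_n^\sigma(k)\backslash H_n(\A)$ and folding the $\A^*$-integral into the central variable gives directly
\[
I(s,\phi,\chi,\Phi)=\int_{P_n^\sigma(k)\backslash H_n(\A)} \Phi(L_m(h_2))\,\phi(h)\,\chi_s(h)\,dh,
\]
with $\chi_s=\chi\,\delta^{-1/2}|\cdot|^s$. From here on $\Phi(L_m(h_2))$ is inert and just rides along.

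Second, and more seriously: once you invoke Proposition~\ref{Fourier}, there is \emph{no} separate ``open orbit plus cuspidality kills the rest'' argument. Cuspidality is already spent in proving Proposition~\ref{Fourier}; after that the unfolding is purely combinatorial. The paper sets up a chain of integrals $I_0=\Psi,\ I_1,\dots,\ I_m=I$, where $I_l$ is an integral over $P_{2l}^\sigma(k)\,(U_{2l+1}\cdots U_n)^\sigma(\A)\backslash H_n(\A)$, and shows $I_l=I_{l+1}$ in two half-steps. The crucial input at each half-step is an exact \emph{bijection} of coset spaces,
\[
P_{2l}^\sigma(k)\,U_{2l+1}^\sigma(k)\backslash P_{2l+1}^\sigma(k)\ \longleftrightarrow\ P_l(k)\backslash G_l(k),\qquad \gamma\longmapsto w'_{2l+1}(\gamma),
\]
and similarly at the even index with $w'_{2l+2}$. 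This bijection lines the coset sum up precisely with the sum in Proposition~\ref{Fourier}, so the proposition collapses it to a single unipotent integral; there are no leftover orbits to discard. Your ``single open orbit with stabiliser $P_{l-1}^\sigma(k)$, non-open orbits vanish by cuspidality'' is the Bump--Friedberg style orbit analysis that Proposition~\ref{Fourier} was introduced to \emph{avoid}; mixing the two leads to double-counting the role of cuspidality and obscures the actual mechanism. The heart of the proof is establishing those coset bijections via the interleaving permutations $w'_j$, two per step, and that is what you should make explicit.
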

\begin{proof}
We suppose that $s$ is large enough so that $I(s,\phi,\chi,\Phi)$ is absolutely convergent. 
Denoting by $\chi_s$ the character $\chi \d^{-1/2}|.|^s$ of $H_n(\A)$, we start with 
$$I(s,\phi,\chi,\Phi)=\!\!\!\!\!\!\!\!\!\! \int\limits_{Z_{n}(\A)H_n(k)\backslash H_n(\A)}\!\!\!\!\!\!\!\!\!\! E(s,h,\chi,\Phi)\phi(h)dh
= \!\!\!\!\!\!\!\!\!\! \int\limits_{Z_{n}(\A)P_{n}^\sigma(k)\backslash H_n(\A)} \!\!\!\!\!\!\!\!\!\! f_{\chi,\Phi}(s,h)\phi(h)dh $$
$$= \!\!\!\!\!\!\!\!\!\!  \int\limits_{P_{n}^\sigma(k) \backslash H_n(\A)} \!\!\!\!\!\!\!\!\!\! \Phi(Ln(h_2))\phi(h)\chi_s(h)dh $$
We denote for the moment $\Phi(L_m(h_2))\chi_s(h)$ by $F(h)$, and for $l$ between $0$ and $m-1$, we write:
$$I_l=\!\!\!\!\!\!\!\!\!\! \int\limits_{P_{2l}^\sigma(k) (U_{2l+1}\dots U_{n})^\sigma(\A) \backslash H_n(\A)} \!\!\!\!\!\!\!\!\!\!\!\!F(h)
\left(\int\limits_{(U_{2l+1}\dots U_{n})(k)\backslash (U_{2l+1}\dots U_{n})(\A)}  \!\!\!\!\!\!\!\phi(nh)\theta^{-1}(n)dn\right)dh,$$
in particular $I_0=\Psi(W_{\phi},\Phi,s)$. We also write $I_m=I(s,\phi,\chi,\Phi)$.\\

To prove the theorem, we only need to prove that $I_{l}=I_{l+1}$ for $0\leq m-1$, that's what we do now, we will see that the absolute convergence 
of $I_l$ (i.e. the fact that $F(h)
\left(\int\limits_{(U_{2l+1}\dots U_{n})(k)\backslash (U_{2l+1}\dots U_{n})(\A)}  \!\!\!\!\!\!\!\phi(nh)\theta^{-1}(n)dn\right)$ is absolutely 
integrable over the quotient $P_{2l}^\sigma(k) (U_{2l+1}\dots U_{n})^\sigma(\A) \backslash H_n(\A)$) implies that of $I_{l+1}$ during the process. We will tacitly use several times the fact that if $G$ is a unimodular locally compact group, $K<H$ closed unimodular 
subgroups of $G$, and $A$ is a continuous integrable function on $K\backslash G$, then $B(x)=\int_{K\backslash H} A(hx) dh$ is absolutely convergent for all $x\in G$, integrable over $H\backslash G$, and one has 
$\int_{H\backslash G} B(h)dh=\int_{K\backslash G} A(g)dg$.\\
 We thus suppose that $I_l$ is absolutely convergent, so one can write $I_l$ 
as the absolutely convergent sum  
$$I_l=\sum_{\g}\int\limits_{P_{2l+1}^\sigma(k) (U_{2l+1}\dots U_{n})^\sigma(\A) \backslash H_n(\A)} \!\!\!\!\!\!\!\!\!\!\!\! \!\!\!\!\!\!\! F(h)
\left(\int\limits_{(U_{2l+1}\dots U_{n})(k)\backslash (U_{2l+1}\dots U_{n})(\A)} \!\!\!\!\!\!\!\!\!\!\!\!\!\!\!\!\!\!\!\!\!\!\!\!\!\!\!\!\!\!\!\phi(n\g h)\theta^{-1}(n)dn\right)dh$$
$$= \int\limits_{P_{2l+1}^\sigma(k) (U_{2l+1}\dots U_{n})^\sigma(\A) \backslash H_n(\A)} \!\!\!\!\!\!\!\!\!\!\!\! \!\!\!\!\!\!\! F(h)
\left(\sum_{\g} \int\limits_{(U_{2l+1}\dots U_{n})(k)\backslash (U_{2l+1}\dots U_{n})(\A)} \!\!\!\!\!\!\!\!\!\!\!\!\!\!\!\!\!\!\!\!\!\!\!\!\!\!\!\!\!\!\!\phi(n\g h)\theta^{-1}(n)dn\right)dh,$$ where the sum is over $\g\in P_{2l}^\sigma U_{2l+1}^\sigma(k)\backslash P_{2l+1}^\sigma(k)$. 
Now, a system of representatives of $P_{2l}^\sigma(k)U_{2l+1}^\sigma(k) \backslash P_{2l+1}^\sigma(k)$ is given by the elements $w'_{2l+1}(\g)$, for 
$\g$ in $P_{l}(k)\backslash G_{l}(k)$.\\
We now apply Proposition \ref{Fourier} at $p=1$ to the cusp form:
$$p\in P_{l+1}\mapsto \!\!\!\!\!\!\!\!\!\!\!\!\!\!\!\!\!\!\!\! \int\limits_{(U_{2l+2}\dots U_{n})(k)\backslash (U_{2l+2}\dots U_{n})(\A)} 
\!\!\!\!\!\!\!\!\!\!\!\!\!\!\!\!\!\!\!\! \phi(nw'_{2l+1}(p)h)\theta^{-1}(n)dn.$$  
We thus obtain the relation, the sum being absolutely convergent by Proposition \ref{Fourier}:
$$\sum_{\g\in P_l(k)\backslash G_l(k)} \left(\int\limits_{(U_{2l+1}U_{2l+2}\dots U_{n})(k)\backslash (U_{2l+1}U_{2l+2}\dots U_{n})(\A)} 
 \!\!\!\!\!\!\!\phi(nw'_{2l+1}(\g) h)\theta^{-1}(n)dn\right)$$ 
$$=\int\limits_{u\in U_{2l+1}^\sigma(k)\backslash U_{2l+1}^\sigma(\A)}\left(\int\limits_{n\in(U_{2l+2}\dots U_{n})(k)\backslash (U_{2l+2}\dots U_{n})(\A)}
 \!\!\!\!\!\!\!\phi(nuh)\theta^{-1}(n)dn\right) du.$$

Replacing in $I_l$, one obtains the equality, with $J_l$ absolutely convergent:
$$I_l=J_l=\!\!\!\!\!\!\!\!\!\!\!\!\!\!\!\!\!\!\!\! \!\!\!\!\!\!\!\!\!\!\int\limits_{P_{2l+1}^\sigma(k) (U_{2l+2}\dots U_{n})^\sigma(\A) \backslash H_n(\A)} \!\!\!\!\!\!\!\!\!\!\!\!F(h)
\left(\int\limits_{(U_{2l+2}\dots U_{n})(k)\backslash (U_{2l+2}\dots U_{n})(\A)}  \!\!\!\!\!\!\!\phi(nh)\theta^{-1}(n)dn\right)dh.$$

Again, we have 
$$J_l=\sum_{\g}\int\limits_{P_{2l+2}^\sigma(k) (U_{2l+2}\dots U_{n})^\sigma(\A) \backslash H_n(\A)} \!\!\!\!\!\!\!\!\!\!\!\! \!\!\!\!\!\!\! F(h)
\left(\int\limits_{(U_{2l+2}\dots U_{n})(k)\backslash (U_{2l+2}\dots U_{n})(\A)} \!\!\!\!\!\!\!\!\!\!\!\!\!\!\!\!\!\!\!\!\!\!\!\!\!\!\!\!\!\!\!\phi(n\g h)\theta^{-1}(n)dn\right)dh,$$ where the sum is now over $\g\in P_{2l+1}^\sigma(k) U_{2l+2}^\sigma(k)\backslash P_{2l+2}^\sigma(k)$, a system of 
representatives of which is given by the elements $w'_{2l+2}(\g)$, for $\g$ in $P_{l+1}(k)\backslash G_{l+1}(k)$. Applying again Proposition \ref{Fourier} at $p=1$, this time to the cusp form: 
$$p\in P_{l+2}\mapsto
\int\limits_{(U_{2l+3}\dots U_{n})(k)\backslash (U_{2l+3}\dots U_{n})(\A)} \phi(nw'_{2l+2}(p)h)\theta^{-1}(n)dn$$ one has 
$$\sum_{\g\in P_{l+1}(k)\backslash G_{l+1}(k)} \left(\int\limits_{(U_{2l+2}U_{2l+3}\dots U_{n})(k)\backslash (U_{2l+2}U_{2l+3}\dots U_{n})(\A)}  \!\!\!\!\!\!\!\phi(nw'_{2l+2}(\g) h)\theta^{-1}(n)dn\right)$$ 
$$= \int\limits_{u\in U_{2l+2}^\sigma(k)\backslash U_{2l+2}^\sigma(\A)} 
\left( \int\limits_{n\in(U_{2l+3}\dots U_{n})(k)\backslash (U_{2l+3}\dots U_{n})(\A)}  \!\!\!\!\!\!\!\phi(nuh)\theta^{-1}(n)dn\right)du.$$
Finally replacing in $J_l$, one obtains $J_l=I_{l+1}$, together with the absolute convergence of $I_{l+1}$. Notice that when 
$l=m-1$, the last bit of the proof becomes 
$$J_m=\int\limits_{P_{n-1}^\sigma(k) U_{n}^\sigma(\A) \backslash H_n(\A)} F(h)
\left(\int\limits_{U_n(k)\backslash U_{n}(\A)}  \phi(nh)\theta^{-1}(n)dn\right)dh$$
$$\!\!\!\!\!\!\!=\int\limits_{P_{n}^\sigma(k)  U_{n}^\sigma(\A) \backslash H_n(\A)} F(h)\sum_{\g\in w'_{n}(P_{n-1}(k)\backslash G_{n-1}(k))}
\left(\int\limits_{ U_{n}(k)\backslash U_{n}(\A)} \phi(n\g h)\theta^{-1}(n)dn\right)dh$$
$$\!\!\!\!\!\!\!=\!\!\!\!\!\!\!\!\!\!\!\!\!\!\int\limits_{P_{n}^\sigma(k)  U_{n}^\sigma(\A) \backslash H_n(\A)}\!\!\!\!\!\!\! F(h)
\left(\int\limits_{ U_{n}^\sigma(k)\backslash U_{n}^\sigma(\A)} \phi(nh)\theta^{-1}(n)dn\right)dh =\!\!\!\!\!\!\!\!\!\!\!\!\!\! \int\limits_{P_{n}^\sigma(k)(\A) \backslash H_n(\A)} F(h)
\phi(h)dh  =I_m.$$
\end{proof}

As a corollary, we see that $\Psi(s,W_{\phi},\chi,\Phi)$ extends to a meromorphic function (namely $I(s,\phi,\chi,\Phi)$). Writing 
$\pi$ as the restricted tensor product $\otimes_{\nu} \pi_\nu$, for any $W=\prod_\nu W_\nu$ in $W(\pi,\theta)$, any decomposable 
$\Phi=\prod_\nu \Phi_\nu$ in $\mathcal{S}(\A^{n})$, one has $$\Psi(s, W,\chi,\Phi)= \prod_\nu \Psi(s,W_\nu,\chi_\nu,\Phi_\nu).$$

\subsection{The partial $L$-function}

Let $\pi=\otimes_{\nu} \pi_\nu$ be a cuspidal automorphic representation of $G_{n}(\A)$, and $S$ the finite set of places of $k$, 
such that for $\nu$ in $S$, $\pi_\nu$ is archimedean or ramified. Let $\alpha=\otimes_\nu \alpha_\nu$ be a character of $\A^*/k^*$ 
(with $\alpha_\nu$ unramified for almost all $\nu$), and 
$\chi$ be the character $h(h_1,h_2)\mapsto \alpha(det(h_1)/det(h_2))$ of $H_n(\A)$. We define the partial $L$-function 
$L^{lin,S}(s,\pi,\chi)$ to be the 
product $$L^{lin,S}(s,\pi,\chi)=\prod_{\nu \notin S} L(s,\alpha_\nu \otimes \pi_{\nu})L(2s,\Lambda^2,\pi_{\nu}),$$ where 
$L(s,\alpha_\nu \otimes \pi_{\nu})$ and $L(2s,\Lambda^2,\pi_{\nu})$ are 
the corresponding $L$-functions of the Galois parameter of $\pi_{\nu}$. Hence, if $\theta_\nu$ has conductor $\o_\nu$ at every unramified 
place $\nu$, the function $L^{lin,S}(s,\pi,\chi)$ is the product 
$\prod_{\nu \notin S} \Psi(s,W_{\nu}^0,\chi_\nu,\Phi_\nu^0)$ thanks to the unramified computation. Because of this, we see that it is meromorphic (it is equal 
to $\Psi(s,W,\Phi)/\prod_{\nu \in S} \Psi(s,W_{\nu},\Phi_\nu)$ for a well chosen $\Phi$ an $W$). We now show that when $Re(\alpha)\geq 0$, 
the partial $L$ function $L^{lin,S}(s,\pi,\chi)$ has at
 most a simple pole at $1/2$, and that this happens 
if and only if $\pi$ admits a $\chi^{-1}$-period.

\begin{thrm}\label{partialpole}
Suppose that $Re(\alpha)\geq 0$, the partial $L$-function $L^{lin,S}(s,\pi,\chi)$ is holomorphic
 for $Re(s)>1/2$ and has a pole at $1/2$ if and 
only if $\pi$ has an $(H_n(\A),\chi^{-1})$-period. If it is the case, this pole is simple.
\end{thrm}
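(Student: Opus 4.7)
The plan is to exploit the identity
\[
L^{lin,S}(s,\pi,\chi)=\frac{I(s,\phi,\chi,\Phi)}{\prod_{\nu\in S}\Psi(s,W_{\phi_\nu},\chi_\nu,\Phi_\nu)},
\]
valid for any pure tensor data $W_\phi=\prod W_{\phi_\nu}$, $\Phi=\prod\Phi_\nu$ with $W_{\phi_\nu}=W_\nu^0$ and $\Phi_\nu=\Phi_\nu^0$ at $\nu\notin S$, which follows from Theorem \ref{egal} and the Euler factorisation together with the unramified computation in Paragraph \ref{unramified}. This reduces the analytic properties of $L^{lin,S}$ to those of the global $I$ (controlled by Theorem \ref{I}) and those of the local $\Psi_\nu$ at places in $S$ (controlled by Corollary \ref{constant}, Proposition \ref{convarchi}, and Proposition \ref{nonvanish}).

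First I would prove holomorphy in $Re(s)>1/2$ together with the at most simple pole at $1/2$. Given $s_0$ with $Re(s_0)\geq 1/2$, at each non-archimedean place $\nu\in S$ I would take the data from Corollary \ref{constant} so that $\Psi_\nu\equiv 1$; at each archimedean place I would combine Proposition \ref{convarchi} (holomorphy on $Re(s)\geq 1/2-\e$) with Proposition \ref{nonvanish} (nonvanishing at $s_0$, using $Re(\alpha)\geq 0$). Then the denominator of the displayed formula is finite and nonzero at $s_0$, while the numerator $I$ has no worse than a simple pole at $s_0=1/2$ and is holomorphic elsewhere in the region by Theorem \ref{I}. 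It follows that $L^{lin,S}$ is holomorphic for $Re(s)>1/2$ and has at most a simple pole at $1/2$.

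For the direction ``pole implies period'', keep the local data at $S$ as above and pick $\Phi$ with $\widehat{\Phi}(0)\neq 0$. If $L^{lin,S}$ has a pole at $1/2$, the relation $I(s,\phi,\chi,\Phi)=L^{lin,S}(s)\prod_{\nu\in S}\Psi_\nu(s)$ forces $I$ to inherit a simple pole at $1/2$, and the residue formula in Theorem \ref{I} then gives a nonzero global $\chi^{-1}$-period for $\phi$.

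The main obstacle is the converse: assuming $\pi$ has an $(H_n(\A),\chi^{-1})$-period, I must produce pure tensor data simultaneously satisfying $P(\phi)\neq 0$, $\widehat{\Phi}(0)\neq 0$, and $\prod_{\nu\in S}\Psi_\nu(s)$ holomorphic and nonzero at $1/2$. Once such data is exhibited, the displayed formula forces the simple pole of $I$ at $1/2$ (guaranteed by Theorem \ref{I}) to produce a simple pole of $L^{lin,S}$ at $1/2$. To find it, one uses that the period functional, being nonzero on $\pi$, is nonzero on the subspace spanned by pure tensors with canonical components outside $S$; by local multiplicity one of $(H_n(k_\nu),\chi_\nu)$-functionals, $P$ factorises into local periods $\ell_\nu$, so $P(\phi)\neq 0$ on a pure tensor reduces to the local conditions $\ell_\nu(\phi_\nu)\neq 0$. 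At each $\nu\in S$ the two open conditions $\ell_\nu(\phi_\nu)\neq 0$ and ``$\Psi_\nu(1/2,W_{\phi_\nu},\chi_\nu,\Phi_\nu)$ finite and nonzero'' both cut out complements of proper subvarieties of $\pi_\nu\otimes\mathcal{S}(k_\nu^n)$, and Corollary \ref{constant} (non-archimedean) and Proposition \ref{nonvanish} (archimedean) supply enough flexibility to intersect them, completing the argument.
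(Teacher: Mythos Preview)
Your argument for holomorphy on $Re(s)>1/2$, the at-most-simple pole at $1/2$, and the implication ``pole $\Rightarrow$ period'' is exactly the paper's: choose local data at $S$ via Corollary~\ref{constant} and Propositions~\ref{convarchi}--\ref{nonvanish} so that $\prod_{\nu\in S}\Psi_\nu$ is holomorphic and nonzero near $1/2$ with $\widehat{\Phi}(0)>0$, then read everything off from $I(s,\phi,\chi,\Phi)=L^{lin,S}(s,\pi,\chi)\prod_{\nu\in S}\Psi_\nu(s)$ together with Theorems~\ref{egal} and~\ref{I}.

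Where you diverge is the converse. The paper simply asserts that ``the theorem follows'' from the displayed identity and Theorems~\ref{egal}, \ref{I}; strictly speaking that identity only shows $L^{lin,S}$ has a pole iff \emph{the particular} $\phi$ produced by the construction has nonzero period, not iff some cusp form in $\pi$ does. You have correctly isolated this as the delicate point and propose to close it by invoking local multiplicity one of $(H_n(k_\nu),\chi_\nu)$-functionals (Jacquet--Rallis) to factorise the global period as $\prod_\nu\ell_\nu$ and then intersect, at each $\nu\in S$, the two nonvanishing conditions $\ell_\nu\neq 0$ and $\Psi_\nu(1/2,\cdot)\neq 0$. This is a genuine strengthening of the paper's argument and is a legitimate way to complete it; what it buys is an honest proof of the converse, at the cost of importing a substantial external uniqueness theorem that the paper never invokes.

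One point needs tightening. At non-archimedean $\nu\in S$ the condition ``$\Psi_\nu(1/2,W_\nu,\chi_\nu,\Phi_\nu)$ finite'' is a \emph{closed} linear condition on $W_\nu$ (vanishing of negative Laurent coefficients), not an open one, so your ``complements of proper subvarieties'' heuristic does not apply directly. You should either (i) argue that for unitary generic $\pi_\nu$ with $Re(\alpha_\nu)\ge 0$ the local $L^{lin}(s,\pi_\nu,\chi_\nu)$ has no pole at $1/2$, so every $\Psi_\nu$ is finite there and the two remaining conditions are both complements of hyperplanes; or (ii) observe that for fixed $\Phi_\nu$ with $\widehat{\Phi_\nu}(0)>0$ the map $W_\nu\mapsto \Psi_\nu(1/2,W_\nu,\chi_\nu,\Phi_\nu)$, after removing any pole, yields a nonzero $(H_n,\chi_\nu^{-1})$-equivariant functional which by the very multiplicity-one you are invoking must be a nonzero multiple of $\ell_\nu$, so the two nonvanishing conditions actually coincide. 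Either route completes your argument.
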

\begin{proof}
Let $S_{\infty}$ be the archimedean places of $k$, and $S_f$ the set of finite places in $S$. First, for $\nu$ in $S_f$, thanks to Corollary 
\ref{constant}, we take $W_\nu$ and $\Phi_\nu$ such that $\Psi(s,W_{\nu},\chi_\nu,\Phi_\nu)=1$ for all $s$ 
(and $\widehat{\Phi_\nu}(0)> 0$ because $\Phi_\nu$ is positive). For 
any $s_0\geq 1/2$, if $\nu$ belongs to $S_\infty$, it is possible to take $W_{\nu,s_0}$ and $\Phi_{\nu,s_0}$ with $\widehat{\Phi_{\nu,s_0}}(0)>0$ such 
that $\Psi(s,W_{\nu,s_0},\chi_\nu,\Phi_{\nu,s_0})$ is convergent for $s\geq 1/2-\epsilon$, and $\neq 0$ for $s=s_0$ according to Propositions
\ref{convarchi} and \ref{nonvanish}. We write 
$$W_{s_0}=\prod_{\nu\in S_\infty} W_{\nu,s_0}\prod_{\nu\in S_f} W_{\nu}\prod_{\nu\notin S} W_{\nu}^0,$$ 
notice that $W_{s_0}$ is equal to $W_\phi$ for some cusp $\phi$ in the space of $\pi$, thanks to the restricted tensor product decomposition of $\pi$, 
which by multiplicity $1$ for local Whittaker functionals, implies the same decomposition of the global Whittaker model $W(\pi,\theta)$. If we now write  
$$\Phi_{s_0}=\prod_{\nu\in S_\infty} \Phi_{\nu,s_0}\prod_{\nu\in S_f} \Phi_{\nu}\prod_{\nu\notin S} \Phi_{\nu}^0,$$ the theorem follows 
from the equality 
$\prod_{\nu\in S}\Psi(s,W_{\nu,s_0},\chi_\nu,\Phi_{\nu,s_0}) L^{lin,S}(s,\pi)= \Psi(s,W_{s_0},\chi,\Phi_{s_0})$, and Theorems \ref{egal} and \ref{I}, as 
$\widehat{\Phi}(0)\neq 0$ (it is positive).
\end{proof}

We then recall the following Lemma.

\begin{lem}\label{selfdual}
 If the partial exterior square $L$-function $L^S(\pi,\Lambda^2,\pi)$ has a pole at $1$, then $\pi$ is self-dual.
\end{lem}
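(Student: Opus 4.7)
The plan is to derive the statement from two classical facts about automorphic $L$-functions. At each unramified place $\nu$, if $\pi_\nu$ has Satake parameters $(z_{1,\nu},\dots,z_{n,\nu})$, the decomposition of the tensor square of the standard representation of $GL_n(\C)$ into its symmetric and exterior parts yields
$$L(s,\pi_\nu\times\pi_\nu)=L(s,\pi_\nu,\mathrm{Sym}^2)\,L(s,\pi_\nu,\Lambda^2),$$
and taking the product over $\nu\notin S$ gives the global factorisation
$$L^S(s,\pi\times\pi)=L^S(s,\pi,\mathrm{Sym}^2)\,L^S(s,\pi,\Lambda^2).$$

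The first input I would quote is Shahidi's non-vanishing result: the partial symmetric-square $L$-function $L^S(s,\pi,\mathrm{Sym}^2)$ is holomorphic and nonzero on the line $\mathrm{Re}(s)=1$, in particular at $s=1$. The second is the theorem of Jacquet--Shalika characterising the pole at $s=1$ of the Rankin--Selberg $L$-function: $L^S(s,\pi\times\pi)$ has a (simple) pole at $s=1$ if and only if $\pi\cong\tilde\pi$, i.e. $\pi$ is self-dual.

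Granting these two inputs, the argument is immediate. Assume $L^S(s,\pi,\Lambda^2)$ has a pole at $s=1$. Shahidi's result ensures that $L^S(s,\pi,\mathrm{Sym}^2)$ is finite and nonzero at $s=1$, so the factorisation displayed above forces $L^S(s,\pi\times\pi)$ to have a pole at $s=1$. The Jacquet--Shalika criterion then yields $\pi\cong\tilde\pi$, and the lemma follows.

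The main obstacle is not analytic but bibliographic: making sure the precise forms of Shahidi's non-vanishing for the symmetric square at $s=1$ and of the Jacquet--Shalika pole characterisation are quoted correctly. No further computation is needed beyond the elementary unramified factorisation at the start.
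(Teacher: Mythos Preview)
Your argument is correct and is, in fact, the standard ``$L$-function'' proof of this lemma. The paper, however, proceeds along a different, period-theoretic route: it invokes the main theorem of \cite{JS2} to pass from the pole of $L^{S}(s,\Lambda^{2},\pi)$ at $1$ to the existence of a nonvanishing global Shalika period for $\pi$; this forces each non-archimedean component $\pi_\nu$ to admit a Shalika model, and then \cite{JR} is used to deduce that every such $\pi_\nu$ is self-dual, whence $\pi\simeq\tilde{\pi}$ by strong multiplicity one.

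The contrast is instructive. Your approach stays entirely on the analytic side: the unramified identity $L^{S}(s,\pi\times\pi)=L^{S}(s,\pi,\mathrm{Sym}^{2})\,L^{S}(s,\pi,\Lambda^{2})$, Shahidi's nonvanishing of $L^{S}(s,\pi,\mathrm{Sym}^{2})$ at $s=1$ (only nonvanishing is really needed; holomorphy is a bonus), and the Jacquet--Shalika criterion for the pole of the Rankin--Selberg $L$-function. This is shorter and uses only widely quoted theorems about $L$-functions. The paper's route is more in keeping with its own theme---linear and Shalika periods---and has the side benefit of exhibiting the local mechanism (Shalika model $\Rightarrow$ self-dual) rather than treating the conclusion as a black box. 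Either proof is perfectly acceptable; yours is the one most readers would supply if asked to prove the lemma from scratch.
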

\begin{proof}
 It is a consequence of the main theorem of \cite{JS2} that if $L^S(\pi,\Lambda^2,\pi)$ has a pole at $1$, then 
$\pi$ has a non-vanishing Shalika period, hence all its non-archimedean 
components admit a Shalika model. Then, by \cite{JR}, all the non archimedean components must be self-dual, 
hence $\pi$ as well by strong multiplicity one.
\end{proof}

We will use it in the following proof, to remove the assumption $Re(\alpha)\geq 0$. We reobtain a theorem of 
Friedberg and Jacquet (\cite{FJ}), using directly the Bump-Friedberg $L$-function. 

\begin{thrm}\label{FJ}
The cuspidal automorphic representation $\pi$ of $G_{n}(\A)$ admits a global $\chi^{-1}$-period if and only if 
$L^S(s,\Lambda^2,\pi)$ has a pole at $1$, and $L(1/2,\alpha \otimes \pi)\neq 0$.
\end{thrm}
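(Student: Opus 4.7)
The plan is to bootstrap the theorem from Theorem \ref{partialpole} via the factorization of $L^{lin,S}$ into standard and exterior-square pieces, and then to remove the assumption $Re(\alpha)\geq 0$ by combining a substitution on $H_n$ with Lemma \ref{selfdual}.

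First I would write
\[
L^{lin,S}(s,\pi,\chi)=L^S(s,\alpha\otimes\pi)\,L^S(2s,\Lambda^2,\pi).
\]
Because $\pi$ is cuspidal on $G_{2m}(\A)$ with $2m\geq 2$, Godement-Jacquet makes $L^S(s,\alpha\otimes\pi)$ entire, while $L^S(2s,\Lambda^2,\pi)$ has at most a simple pole at $s=1/2$. Consequently $L^{lin,S}(s,\pi,\chi)$ has a pole at $s=1/2$ if and only if $L^S(\Lambda^2,\pi)$ has a pole at $1$ and $L(1/2,\alpha\otimes\pi)\neq 0$. When $Re(\alpha)\geq 0$, Theorem \ref{partialpole} directly converts this pole into the $(H_n,\chi^{-1})$-period condition, settling the theorem in that range.

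To treat $Re(\alpha)<0$, I would transport the period along the involution $h\mapsto{}^th^{-1}$ on $H_n$. Since ${}^th(h_1,h_2)^{-1}=h({}^th_1^{-1},{}^th_2^{-1})$, this map preserves $H_n$, its Haar measure, and the subgroup $Z_n(\A)H_n(k)$, and it sends $\chi$ to $\chi^{-1}$. With $\tilde\phi(g)=\phi({}^tg^{-1})\in\tilde\pi$, a change of variables yields
\[
\int_{Z_n(\A)H_n(k)\backslash H_n(\A)}\tilde\phi(h)\chi^{-1}(h)\,dh=\int_{Z_n(\A)H_n(k)\backslash H_n(\A)}\phi(h)\chi(h)\,dh,
\]
so $\pi$ admits an $(H_n,\chi^{-1})$-period if and only if $\tilde\pi$ admits an $(H_n,\chi'^{-1})$-period, where $\chi'$ is the character of parameter $\alpha^{-1}$. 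Since $Re(\alpha^{-1})>0$, Theorem \ref{partialpole} now applies to $\tilde\pi$ and $\chi'$ and translates this period into: $L^S(\Lambda^2,\tilde\pi)$ has a pole at $1$ and $L(1/2,\alpha^{-1}\otimes\tilde\pi)\neq 0$.

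Here Lemma \ref{selfdual} intervenes: such a pole forces $\tilde\pi$ (and hence $\pi$) to be self-dual, so $L^S(\Lambda^2,\tilde\pi)=L^S(\Lambda^2,\pi)$. The standard functional equation
\[
L(s,\alpha\otimes\pi)=\epsilon(s,\alpha\otimes\pi)\,L(1-s,\alpha^{-1}\otimes\tilde\pi)
\]
at $s=1/2$ then identifies $L(1/2,\alpha^{-1}\otimes\tilde\pi)\neq 0$ with $L(1/2,\alpha\otimes\pi)\neq 0$ up to a non-vanishing $\epsilon$-factor, completing the case $Re(\alpha)<0$. The main obstacle is the bookkeeping around this duality: one must verify carefully that the involution $h\mapsto{}^th^{-1}$ interchanges the period of $\pi$ at parameter $\alpha$ with that of $\tilde\pi$ at parameter $\alpha^{-1}$, so that Lemma \ref{selfdual} can be invoked exactly once to collapse the argument back to a statement about $\pi$. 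The remaining ingredients --- Godement-Jacquet holomorphy, the Jacquet-Shalika bound on the order of the exterior-square pole, and the standard functional equation --- are off-the-shelf.
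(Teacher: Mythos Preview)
Your strategy is the paper's: factorize $L^{lin,S}(s,\pi,\chi)=L^S(s,\alpha\otimes\pi)L^S(2s,\Lambda^2,\pi)$, invoke Theorem~\ref{partialpole} for $Re(\alpha)\geq 0$, then for $Re(\alpha)<0$ pass to $(\tilde\pi,\alpha^{-1})$ via $h\mapsto{}^th^{-1}$ and use Lemma~\ref{selfdual} together with the Godement--Jacquet functional equation to return to $\pi$. Your treatment of the duality step is in fact more explicit than the paper's (which simply says ``obviously'').

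There is, however, one step you skip that the paper addresses explicitly. From the factorization, a pole of $L^{lin,S}$ at $1/2$ is equivalent to a pole of $L^S(s,\Lambda^2,\pi)$ at $1$ together with $L^S(1/2,\alpha\otimes\pi)\neq 0$; the theorem, by contrast, is stated for the \emph{complete} value $L(1/2,\alpha\otimes\pi)$. One implication is immediate since $L^S$ is an entire multiple of $L$, but the converse requires knowing that the local factors $L(1/2,\alpha_\nu\otimes\pi_\nu)$ for $\nu\in S$ are finite, i.e.\ that they have no pole at $s=1/2$. The paper does not appeal to Satake-parameter bounds here; instead it argues via the $G_n\times GL_1$ Rankin--Selberg integrals: using the convergence of the local integrals $\Psi(s,W_\nu,\alpha_\nu)$ for $Re(s)>1/2-\epsilon$ (Proposition~3 of \cite{JS2}) and the fact that $\Psi(s,W_S,\alpha_S)/L_S(s,\alpha\otimes\pi)$ can be made nonzero at $s=1/2$, it deduces that $L^S(1/2,\alpha\otimes\pi)=0$ forces $L(1/2,\alpha\otimes\pi)=0$. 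Your sentence ``Consequently $L^{lin,S}(s,\pi,\chi)$ has a pole at $s=1/2$ if and only if \dots\ and $L(1/2,\alpha\otimes\pi)\neq 0$'' silently replaces $L^S$ by $L$ and so jumps over this point.
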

\begin{proof}
We first give the proof for $\alpha$ satisfying $Re(\alpha)\geq 0$, we will deduce the general case form this particular one. It is well know (see \cite{GJ}) that $L(s,\alpha\otimes \pi)$ is entire, hence $L^S(s,\alpha\otimes \pi)$. 
Moreover $L(1/2,\alpha\otimes \pi)=0$ if and only if $L^S(1/2,\alpha\otimes \pi)=0$. 
Indeed $L^S(s,\alpha\otimes \pi)$ is an entire multiple of $L(s,\alpha\otimes \pi)$, hence one implication. 
Using the Rankin-Selberg convolution for $G_{n}(\A)\times \A^*$, 
then for any $W$ in $W(\pi,\theta)$, denoting 
$\int_{\A^*}W(a,1,\dots,1)\alpha(a)|a|^{(n-1)/2}d^*a$ by 
$\Psi(s,W,\alpha)$, $\prod_{\nu\in S} \int_{k_\nu^*}W(t_\nu,1,\dots,1)\alpha_\nu(t_\nu)|t_\nu|^{(n-1)/2}d^*t_\nu$ by $\Psi(s,W_S,\alpha_S)$, and 
$\prod_{\nu\in S} L(s,\alpha_\nu\otimes \pi_\nu)$ by $L_S(s,\alpha\otimes \pi)$, one has $\Psi(s,W_S,\alpha_S)L^S(s,\alpha\otimes \pi)=
[\Psi(s,W_S,\alpha_S)/L_S(s,\alpha \otimes \pi)]L(s,\alpha \otimes \pi)$. But 
there is $\epsilon>0$ such that $\Psi(s,W_{S},\alpha_S)$ converges for $Re(s)>1/2-\e$ according to the estimates for the $W_\nu$'s 
retriction to $A_{n-1}$ given in Proposition 3 of \cite{JS2}. Hence if $L^S(1/2,\alpha\otimes \pi)=0$, then 
$[\Psi(1/2,W_S,\alpha_S)/L_S(1/2,\alpha\otimes \pi)]L(1/2,\alpha \otimes \pi)=0$,
 but one can always choose $W$ such that $$[\Psi(1/2,W_S,\alpha_S)/L_S(1/2,\alpha \otimes \pi)]\neq 0,$$ and $L_S(1/2,\alpha \otimes \pi)=0$. 
It is also proved in \cite{JS2} that 
the partial exterior square $L$-function $L^S(s,\Lambda^2,\pi)$ can have a pole at $1$ which is at most simple. Now the theorem follows from 
the equality $L^{lin,S}(s,\pi,\chi)=L^{S}(s,\alpha \otimes \pi)L^{S}(2s,\Lambda^2,\pi)$ and Theorem 
\ref{partialpole}.\\
Now if $Re(\alpha)<0$. If $\pi$ admits a global $\chi^{-1}$-period, then its dual representation $\tilde{\pi}$ admits 
obviously a $\chi$-period. By the previous case, we know that $L^S(s,\Lambda^2,\tilde{\pi})$ 
has a pole at $1$, and $L(1/2,\alpha^{-1} \otimes \tilde{\pi})\neq 0$. But then, by Lemma \ref{selfdual}, 
we have $\tilde{\pi}=\pi$, hence $L^S(s,\Lambda^2,\pi)$ has a pole at $1$, and we obtain 
that $L(1/2,\alpha \otimes \pi)\neq 0$ thanks to the functional equation of the Godement-Jacquet $L$-function. Conversely, if 
$L^S(s,\Lambda^2,\pi)$ has a pole at $1$, and $L(1/2,\alpha \otimes \pi)\neq 0$, by Lemma \ref{selfdual} again, 
we know that $\pi$ is equal to $\tilde{\pi}$, hence $L^S(s,\Lambda^2,\tilde{\pi})$ has a pole at $1$. Using the functional equation again, 
we also obtain $L(1/2,\alpha^{-1} \otimes \tilde{\pi})\neq 0$, hence, as $Re(\alpha^{-1})\geq 0$, we deduce that 
$\tilde{\pi}$ has a $\chi$-period, i.e. that $\pi$ has a $\chi^{-1}$-period.
\end{proof}

\section{The odd case}\label{odd}

In this section, we just state the results for the odd case, which is totally similar to the even case.\\ 

\noindent In the local non archimedean case, for $\pi$ a generic representation of $G_{n}$, 
the integrals we consider are the following for $W$ in $W(\pi,\theta)$, and $\Phi$ in $\sm_c(F^n)$:

$$\Psi(s,W,\chi,\Phi)= \int_{N_{n}\cap H_{n} \backslash H_{n}} W (h)\Phi(L_{m+1}(h_1))|h|^s\chi(h)dh.$$ 

We have:

\begin{thrm}\label{nonarchL'}
There is a real number $r_{\pi,\chi}$, such that each integral $\Psi(s,W,\chi,\Phi)$ converges for $Re(s)>r_{\pi}$. 
Moreover, when $W$ and $\Phi$ vary in $W(\pi,\theta)$ and $C_c^{\infty}(F^{n})$ respectively, they 
span a fractional ideal of $\C[q^{s},q^{-s}]$ in $\C(q^{-s})$, generated by an Euler factor which we denote 
$L^{lin}(s,\pi,\chi)$.\\
\end{thrm}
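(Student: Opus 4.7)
The plan is to transpose the three-step argument of Theorem \ref{nonarchL} to the odd setting essentially verbatim, the only differences being that the Fourier-theoretic variable is now the last row $L_{m+1}(h_1)$ of the $(m+1)\times(m+1)$ block $h_1$ and that the normalising factor $\d^{-1/2}$ no longer appears in the integrand. Convergence of $\Psi(s,W,\chi,\Phi)$ for $Re(s)$ sufficiently large follows from the asymptotic expansion of $W$ on the diagonal torus of $G_n$ recalled in \cite{JPS2}: after Iwasawa decomposition on $H_n$, one majorises $|W(a)|$ on the split torus by a finite sum of gauge-type functions, and the resulting integral over $A_n^\sigma$ against $|\cdot|^s\chi$ converges for $Re(s) > r_{\pi,\chi}$ for some real $r_{\pi,\chi}$ depending on $\pi$ and $\chi$.

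Next, right translation by an element $h_0 \in H_n$ multiplies $\Psi(s,W,\chi,\Phi)$ by $|h_0|^{-s}\chi^{-1}(h_0)$. Consequently the $\C[q^s,q^{-s}]$-module $\mathcal{I}$ spanned in $\C(q^{-s})$ by all these integrals is stable under multiplication by $|h_0|^s\chi(h_0)$ for every $h_0 \in H_n$; choosing $h_0$ so that $|h_0|^s\chi(h_0) = q^{\pm s}$ shows that $\mathcal{I}$ is a fractional ideal of $\C[q^s,q^{-s}]$ inside $\C(q^{-s})$.

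To guarantee that the generator of $\mathcal{I}$ can be chosen as an Euler factor, it suffices, exactly as in the even case, to exhibit a single pair $(W,\Phi)$ for which $\Psi(s,W,\chi,\Phi) = 1$ identically in $\C(q^{-s})$. Use the Iwasawa decomposition $H_n = P_n^\sigma K_n^\sigma$, with $K_n^\sigma$ the points of $H_n$ over $\o$, to rewrite the integral as an iterated integral over $N_n^\sigma \backslash P_n^\sigma$ and $K_n^\sigma$. By the standard density statement from \cite{JPS2}, every $\phi \in \sm_c(N_n \backslash P_n, \theta)$ is the restriction to $P_n$ of some Whittaker function in $W(\pi,\theta)$; pick such a $W$ whose restriction to $P_n^\sigma$ is right-invariant under a compact open subgroup $U \subset K_n^\sigma$ that also fixes $\chi$, and take $\Phi$ to be the characteristic function of a small neighbourhood of $L_{m+1}(I_{m+1})$ on which $L_{m+1}(k_1)$ is constant for $k \in U$. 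The $K_n^\sigma$-integration then collapses to a positive multiple of $\int_{N_n^\sigma \backslash P_n^\sigma}\phi(p)|p|^s\chi(p)\,dp$, which for a judicious choice of $\phi$ is identically $1$.

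The main obstacle I anticipate is purely combinatorial rather than analytic: one must verify that in the odd-case parametrisation of $H_n$ attached to the partition $(m+1,m)$ the map $k \mapsto L_{m+1}(k_1)$ is sufficiently well-behaved near the identity of $K_n^\sigma$ for the characteristic-function choice of $\Phi$ to collapse the Iwasawa integral cleanly, and one must match the corresponding Haar measure normalisations. Both are routine adaptations of the even-case verification, but they are where the absence of the factor $\d^{-1/2}$ and the different parabolic shape really intervene.
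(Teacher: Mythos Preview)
Your proposal is correct and takes essentially the same approach as the paper: the paper gives no separate proof of Theorem~\ref{nonarchL'} but declares the odd case ``totally similar to the even case'', and your three steps (asymptotic-expansion convergence, right-translation stability for the fractional ideal, and the Iwasawa-plus-Kirillov argument producing a pair $(W,\Phi)$ with $\Psi\equiv 1$) are exactly the transposition of the proof of Theorem~\ref{nonarchL}. The only omission worth noting is that, as in the even case, the Iwasawa decomposition on $H_n$ also produces a central $F^*$-integration absorbing $\Phi$ and the central character $c_\pi$, which you should make explicit when you write the collapsed integral; otherwise the adaptation is routine, as you anticipated.
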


Let $\pi^0$ be a generic unramified representation of 
$G_{n}$. The unramified computation gives again, thanks to the relation $\d_{B_{n}}(a)= \d_{B_{n}}(a') \d_{B_{n}}(a'')$, with 
$a'=(a_1,a_3,\dots,a_{n})$ and $a''=(a_2,a_4,\dots,a_{n})$, the equality:

$$\Psi(s,W^0,\Phi^0) =\sum_{\l\in \Lambda^{++}}\alpha(\omega)^{c_\l}s_{\l}(q^{-s}z)=L(s,\alpha \circ \pi^0)L(s,\Lambda^2,\pi^{0})$$

In the archimedean case, for $\pi$ a generic representation of $G_{n}$, and $Re(\alpha)\geq 0$, 
the integrals $$\Psi(s,W,\chi,\Phi)= \int_{N_{n}\cap H_{n} \backslash H_{n}} W (h)\Phi(L_{m+1}(h_1))\chi(h)|h|^sdh$$ 
converge again for $Re(s)\geq 1/2-\epsilon$ for a positive number $\epsilon$ depending on $\pi$. For any such $s$, one can chose 
$W$ and $\Phi$ such that they don't vanish.\\

In the global situation, for $\Phi$ in $\mathcal{S}(\A^n)$, 
we define $$f_{\chi,\Phi}(s,h)=|h|^s\chi(h)\int_{\A^*}\Phi(aL_{m+1}(h_1))|a|^{(n)s}d^*a$$ for $h$ in $H_{n}$. 
Associated is the Eisenstein series 
$$E(s,h,\chi,\Phi)=\sum_{\gamma\in Z_{n}(k)P_{n}^\sigma(k)\backslash H_{n}(k)} f_{\chi,\Phi}(s,\gamma h),$$ which converges absolutely for 
$Re(s)>1/2$, extends meromorphically to $\C$, with possible poles simple, and located at $0$ and $1/2$. 
It satsifies the functional equation $$E(1/2-s,^t\!h^{-1},\chi^{-1}\d'^{1/2},\widehat{\Phi})=E(s,h,\chi,\Phi).$$
Then if $\pi$ is a cuspidal automorphic representation of $G_{n}(\A)$, and $\phi$ is a cusp form in the space of $\pi$, we define for $Re(s)>1/2$ 
the integral $$I(s,\phi,\chi,\Phi)=\int_{Z_{n}(\A)H_{n}(k)\backslash H_{n}(\A)} E(s,h,\chi,\Phi)\phi(h)dh,$$ 
which satisfies the statement of the following theorem.

\begin{thrm}\label{I'}
The integral $I(s,\phi,\chi,\Phi)$ extends to an entire function on $\C$. The integral $I(s,\phi,\chi,\Phi)$ 
also admits the following functional equation:
$$I(1/2-s,\tilde{\phi},\chi^{-1}\d^{1/2},\widehat{\Phi})=I(s,\phi,\chi,\Phi),$$ 
where $\tilde{\phi}:g\mapsto \phi(^t\!g^{-1})$.
\end{thrm}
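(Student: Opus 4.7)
The plan is to mirror the even-case argument of Theorem \ref{I}, with one additional point: proving the vanishing of both residues (which is what upgrades ``meromorphic with poles at most at $0$ and $1/2$'' to ``entire'' in the odd case).

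First, I would use the same Poisson-summation decomposition of $E(s,h,\chi,\Phi)$ as in the even case, writing it as the sum of two integrals over $\{|a|\geq 1\}$ of the truncated theta series $\Theta'_\Phi$ and $\Theta'_{\widehat{\Phi}}$ plus a boundary term $u(s)$ whose only singularities are simple poles at $s=0$ and $s=1/2$. After pairing against the cusp form $\phi$ and integrating over $Z_{n}(\A)H_{n}(k)\backslash H_{n}(\A)$, the two $\{|a|\geq 1\}$ contributions extend to entire functions of $s$, by Lemma 11.5 of \cite{GJ} combined with the moderate growth of $\phi$. This yields a meromorphic continuation of $I(s,\phi,\chi,\Phi)$ with at most simple poles at $s=0$ and $s=1/2$. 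The functional equation is then a formal consequence of that for $E(s,h,\chi,\Phi)$: after the substitution $h\mapsto {}^t h^{-1}$ in the outer integral (which preserves the quotient and sends $\phi$ to $\tilde{\phi}$), the functional equation for the Eisenstein series transfers directly to the one claimed for $I$.

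The main obstacle is to show that both residues vanish. The residue at $s=1/2$ is a nonzero scalar multiple of
\[
\widehat{\Phi}(0)\int_{Z_{n}(\A)H_{n}(k)\backslash H_{n}(\A)} \chi(h)\phi(h)\, dh,
\]
so one must prove that this global $\chi^{-1}$-period vanishes identically on cusp forms when $n=2m+1$ is odd. I would exploit the fact that the partition $(m+1,m)$ is strictly unbalanced: writing $H_{n}=w_{n} M_{n} w_{n}^{-1}$, there is a nontrivial unipotent subgroup $V$ of a proper parabolic of $G_{n}$ along which one can average $\phi$ without affecting $\chi$ (the extra row and column of the larger block produce such a $V$), and the resulting inner constant term vanishes by cuspidality of $\phi$. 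This is the odd-case analog of the Fourier expansion argument underlying Proposition \ref{Fourier}, and it is precisely the dimension mismatch between the two blocks that makes it work (in contrast to the even case, where no such $V$ is available and the period need not vanish). The vanishing of the residue at $s=0$ then follows either from the same argument applied to the dual data $(\tilde{\phi},\chi^{-1}\d^{1/2},\widehat{\Phi})$, or directly from the functional equation once the residue at $s=1/2$ is known to vanish.
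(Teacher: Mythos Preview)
Your overall strategy coincides with the paper's: reproduce the even-case analysis to get meromorphic continuation with at most simple poles at $0$ and $1/2$ together with the functional equation, identify the residue at $s=1/2$ as a nonzero multiple of $\widehat{\Phi}(0)$ times the global $\chi^{-1}$-period over $H_n$, and then argue that this period vanishes when $n$ is odd (the pole at $0$ then disappears by the functional equation, as you note). The paper does not prove the vanishing itself; it simply invokes Proposition~2.1 of \cite{BF}.

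The place where your sketch needs care is the mechanism for the vanishing. You propose to find a unipotent subgroup $V$ coming from ``the extra row and column of the larger block'' and to kill the period by cuspidality via the resulting constant term. But $H_n$ is a conjugate of the Levi $GL_{m+1}\times GL_m$, and no unipotent radical of a proper parabolic of $G_n=GL_{2m+1}$ sits inside a Levi: the unipotent pieces one can carve out of the $GL_{m+1}$-block are unipotent radicals of parabolics of $GL_{m+1}$, not of $GL_{2m+1}$, so integrating $\phi$ over such a $V$ is \emph{not} a constant term of $\phi$ along a parabolic of $G_n$, and cuspidality of $\phi$ does not apply directly. The argument in \cite{BF} that the paper cites is a genuine Fourier-expansion/unfolding computation exploiting the asymmetry $(m+1,m)$, not a one-step constant-term trick; that is the piece you would have to supply if you want a self-contained proof rather than a citation.
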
 
The proof is the same up to the following extra argument. It is clear that the residue of $I(s,\phi,\chi,\Phi)$ at $1/2$ is 
$$c\widehat{\Phi}(0)\int_{Z_{n}(\A)H_{n}(k)\backslash H_{n}(\A)}\chi(h) \phi(h)dh,$$ but these integrals are to known to 
vanish according to Proposition 2.1 of \cite{BF}, hence there is actually no pole at $1/2$.\\

Again, we define $$\Psi(s,W_\phi,\chi,\Phi)= \int_{N_{n}(\A)\cap H_{n}(\A) \backslash H_{n}(\A)} W_\phi (h)\Phi(L_{m+1}(h_1))|h|^s\chi(h)dh,$$ 
and this integral converges for $Re(s)$ large, and is in fact equal to $I(s,\phi,\chi,\Phi)$.\\

From this we deduce that the partial $L$-function $L^{lin,S}(s,\pi,\chi)$ is meromorphic, and, when $Re(\alpha)\geq 0$, it is holomorphic for 
$Re(s)\geq 1/2-\e$, for some positive $\e$ 
(corresponding to the $\e$ of the archimedean case).   

\bibliographystyle{plain}
\bibliography{BF}

\end{document}